\documentclass[11pt,reqno]{amsart}
\usepackage[margin=1in]{geometry}
\usepackage{amsmath,amssymb,amsthm,mathtools,comment}
\usepackage{enumitem}
\usepackage{microtype}
\usepackage[colorlinks=true,linkcolor=blue,citecolor=blue,urlcolor=blue]{hyperref}

\newtheorem{theorem}{Theorem}[section]
\newtheorem{proposition}[theorem]{Proposition}
\newtheorem{lemma}[theorem]{Lemma}
\newtheorem{remark}[theorem]{Remark}
\newtheorem{Op}{Open Problem}[section]

\newcommand{\R}{\mathbb R}
\newcommand{\T}{\mathbb T}
\newcommand{\Z}{\mathbb Z}
\newcommand{\N}{\mathbb N}
\newcommand{\BUC}{\mathrm{BUC}}
\newcommand{\Lip}{\mathrm{Lip}}
\newcommand{\cH}{\overline H}
\newcommand{\eps}{\varepsilon}
\newcommand{\dist}{\operatorname{dist}}

\newcommand{\Hbar}{\overline H}
\newcommand{\abs}[1]{\left|#1\right|}
\newcommand{\norm}[1]{\left\lVert #1\right\rVert}
\newcommand{\la}{\lambda}

\title[Optimal convergence rates]{Optimal convergence rates in periodic homogenization of nonconvex Hamilton--Jacobi equations}
\author[J. Jang, Q. Sun, H. V. Tran, Y. Yu]{Jiwoong Jang, Qi Sun, Hung V. Tran, Yifeng Yu}

\date{}

\thanks{
H. V. Tran is partially supported by NSF grant DMS-2348305. 
}

\address[J. Jang]
{
Department of Mathematics, University of Maryland-College Park, William E. Kirwan Hall, 4176 Campus Drive, College Park, Maryland 20742, USA}
\email{jjang124@umd.edu}

\address[Q. Sun]
{Department of Mathematics, Rutgers University, 110 Frelinghuysen Road Piscataway, New Jersey 08854, USA}
\email{qs176@math.rutgers.edu}

\address[H. V. Tran]
{
Department of Mathematics, 
University of Wisconsin-Madison, Van Vleck Hall, 480 Lincoln Drive, Madison, Wisconsin 53706, USA}
\email{hung@math.wisc.edu}

\address[Y. Yu]
{
Department of Mathematics, 
University of California at Irvine, 
California 92697, USA}
\email{yifengy@uci.edu}

\keywords{Periodic homogenization; optimal convergence rates; first-order Hamilton--Jacobi equations; nonconvex Hamiltonians; viscosity solutions}

\subjclass[2020]{35B10, 35B27, 35B40, 35F21, 49L25}

\begin{document}

\begin{abstract}
We study the convergence rates in periodic homogenization of general nonconvex, coercive Hamilton--Jacobi equations.
We show that the optimal convergence rate is $O(\eps^{1/2})$ in one dimension, $O(\eps^{1/3})$ in two dimensions (up to a logarithmic factor), and $O(\eps^{1/3})$ in dimension three or higher.
\end{abstract}

\maketitle

\section{Introduction}

Let $H \in C(\R^n\times\R^n,\R)$ be $\Z^n$-periodic in its first variable and coercive in its second, and consider, for $\eps\in (0,1)$,
\begin{equation}\label{eq:micro}
\begin{cases}
  u_t^\eps+H\left(\frac{z}{\eps},Du^\eps\right)=0 \qquad &\text{in }\R^n\times(0,\infty),\\
  u^\eps(z,0)=g(z)\qquad &\text{on }\R^n.
\end{cases}
\end{equation}
Here, $g\in\BUC(\R^n)\cap\Lip(\R^n)$ is the given initial datum.
By the standard periodic homogenization theory of Hamilton--Jacobi equations \cite{LPV, Evans1992}, $u^\eps$ converges locally uniformly to the solution of
\begin{equation}\label{eq:eff}
\begin{cases}
  u_t+\Hbar(Du)=0  \qquad &\text{in }\R^n\times(0,\infty),\\
  u(z,0)=g(z)\qquad &\text{on }\R^n.
\end{cases}
\end{equation}
The effective Hamiltonian $\Hbar$ is determined by the periodic cell problem; see, for example, \cite{LPV, Evans1992,Tran}. 
For general nonconvex, globally Lipschitz Hamiltonians, the classical quantitative theory \cite{CDI} gives an $O(\eps^{1/3})$ convergence rate (see also \cite{Tran}). 

\subsection{Main results}

We first show that in three dimensions and higher, the optimal convergence rate is precisely $O(\eps^{1/3})$.
It is enough to obtain the result in three dimensions.

\begin{theorem}\label{main-thm}
Assume $n=3$.
There exist a Hamiltonian $H\in C^{0,1}(\R^3\times\R^3)$, which is periodic in its first variable and coercive and nonconvex in its second, and an initial datum $g\in\BUC(\R^3)\cap\Lip(\R^3)$ with the following
property: there are constants $c,C,\eps_0>0$ such that the solutions of \eqref{eq:micro}--\eqref{eq:eff} satisfy
 \[
  c\eps^{1/3}
  \leq
  \|u^\eps-u\|_{L^\infty(\R^3\times[0,1])}
  \leq C\eps^{1/3}
  \qquad(0<\eps\leq\eps_0).
 \]
\end{theorem}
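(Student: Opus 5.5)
The upper bound $\|u^\eps-u\|_{L^\infty(\R^3\times[0,1])}\le C\eps^{1/3}$ follows from the classical theory \cite{CDI}. The only requirement is that the constructed $H$ be globally Lipschitz, or at least coercive with solutions having uniformly bounded gradients, so that we may truncate $H$ outside a large ball in $p$. All the work therefore goes into the lower bound. We construct $H(y,p)$ so that, for a suitable initial datum $g$, $u^\eps$ lags behind $u$ by an amount of order $\eps^{1/3}$ at some point of $\R^3\times[0,1]$.

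We use a nonconvex Hamiltonian of the form $H(y,p)=F(p)+V(y,p)$, where $F$ is a radial nonconvex profile with a double-well structure. Concretely, $F(p)=\min\{|p|^2, (|p|-2)^2+a\}$ or a similar function, so that $\Hbar$ has a flat piece or a kink. The periodic perturbation $V$ is supported on a thin set, for instance tubes around lines in the $\Z^3$ lattice. The heuristic for the rate $\eps^{1/3}$ is a balance of three effects, which we then make rigorous through the optimal control/differential game representation of $u^\eps$. At the micro level, a trajectory must spend time $\sim\eps\tau^{-1}$ to switch between the two branches of $F$. Switching strategies then create a boundary-layer cost of size $\sim\eps/\delta$ over a spatial scale $\delta$. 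The mismatch between the effective and the microscopic dynamics contributes a cost $\sim\delta^2$, coming from the curvature of $g$ and $\Hbar$. Optimizing $\eps/\delta+\delta^2$ gives $\delta=\eps^{1/3}$ and an error of order $\eps^{2/3}/\eps^{1/3}=\eps^{1/3}$. In three dimensions, the extra dimension lets periodic obstacles force every near-optimal path to pay this cost, whereas in lower dimensions paths can avoid it. This is consistent with the different rates in the abstract.

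The key steps are as follows. (i) Compute $\Hbar$ explicitly for the chosen $H$, using the separable or radial structure. The goal is to show that $\Hbar$ agrees with $F$ on a region while the cell problem has a nontrivial corrector, which yields a strict gap $H(y,p)-\Hbar(p)$ along specific directions. (ii) Pick $g$ linear in most directions, with a concave or convex corner that activates the nonconvex part, e.g.\ $g(z)=-|z_1|$ truncated. Then $u$ is given by an explicit Hopf or Hopf--Lax type formula, and $u(0,1)$ is computable. (iii) Prove that $u^\eps(0,1)\ge u(0,1)+c\eps^{1/3}$, or the reverse inequality, by building an explicit subsolution (respectively supersolution) of the $\eps$-problem on scale $\eps^{1/3}$. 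Gluing correctors of the two branches across a layer of width $\eps^{1/3}$, we arrange that the glued function exceeds $u$ by $c\eps^{1/3}$ at the target point. Comparison then transfers this bound to $u^\eps$.

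I expect step (iii) to be the main obstacle. One must show that no cleverer microscopic strategy beats the $\eps^{1/3}$ cost, which amounts to a sharp lower estimate on every admissible test function. The first thing I would try is to reduce the problem, by symmetry of $V$, to a one-parameter family of competitors. I would then construct the comparison function as $\max$ or $\min$ of finitely many corrected planes $p\cdot z+\eps w(z/\eps,p)-t\Hbar(p)$ with $p$ ranging over an $\eps^{1/3}$-net, using the third dimension to make the error terms from the corner add up coherently rather than cancel.
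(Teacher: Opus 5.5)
Your upper bound is fine: as in the paper, it is the classical $O(\eps^{1/3})$ estimate of \cite{CDI}, valid once $H$ is globally Lipschitz. The lower bound, however, is not established, and the one concrete device you propose for step (iii) cannot work.

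Consider a corrected plane $p\cdot z+\eps w(z/\eps,p)-t\Hbar(p)+c_p$. It differs by at most $\eps\|w(\cdot,p)\|_\infty$ from the affine function $\ell_p=p\cdot z+c_p-t\Hbar(p)$, which solves the effective equation exactly. Suppose your barrier lies below $g$ at $t=0$ on the domain of dependence of the target point. Then so does each $\ell_p-\eps\|w(\cdot,p)\|_\infty$, and comparison for \eqref{eq:eff} gives $\ell_p\le u+\eps\|w(\cdot,p)\|_\infty$. Hence any maximum of corrected planes over a net of bounded momenta is at most $u+C\eps$, however the third dimension is used. Symmetrically, a minimum of such planes is at least $u-C\eps$.

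So anything assembled from cell-problem correctors only reproduces the effective evolution. To beat $u$ by $\eps^{1/3}$ you need a subsolution whose gradients exploit the frozen Hamiltonians $H(y,\cdot)$ rather than $\Hbar$. Your worry that ``no cleverer microscopic strategy beats the cost'' is also misplaced: for $u^\eps\ge u+c\eps^{1/3}$, a single subsolution suffices.

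The heuristic has problems too. Minimizing $\eps/\delta+\delta^2$ gives $\eps^{2/3}$, not $\eps^{1/3}$. The claim that lower dimensions avoid the cost contradicts Theorem~\ref{thm:2D-optimal}. And $H=F(p)+V(y,p)$ with $V$ unspecified and ``$\Hbar$ agrees with $F$ on a region'' supplies no actual mechanism.

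The missing idea has two parts. First, make $\Hbar$ flat while the frozen Hamiltonians at two slices of a periodic variable dip below it on disjoint, interlaced sets of momenta. Second, glue the resulting frozen subsolutions through that variable for as long as they remain compatible.

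Concretely, the paper takes $H=(|q|-\kappa)_+-h(y,P)-4r(y)+(|P|-1)_+$ with $\min\{h_0,h_1\}=0$. For $|P|\le1$ a corrector depending only on $y$ then gives $\Hbar(P,0)=0$, so $u=\min\{|X|,4\}$ is stationary.

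At $y=\gamma_i$, the function $w_{i,j}=\max\{|X|,\rho_jt+\sigma_{i,j}(X)\}$ subsolves $w_t-h_i(D_Xw)=0$ and grows like $\rho_jt\sim p_jt$ at $X=0$. The bump centers are interlaced regular polygons on a circle of radius $a_j$ with $N_j\sim p_j^{-1}$ vertices. Hence the two support functions differ by $O(p_j^2|X|)$, and the active region is $|X|\lesssim p_jt$. This gives $|w_{0,j}-w_{1,j}|\lesssim p_j^3t\le\mu$ for $t\le T_j\sim p_j^{-3}$.

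This compatibility is exactly what makes $\max_i\{w_{i,j}-\kappa d_i(y)\}$ a subsolution on $[0,T_j]$. Choosing $j$ with $T_{j-1}<t\le T_j$ yields $U(0,\gamma_i,t)\gtrsim t^{2/3}$. After rescaling, and after replacing $|X|$ by a bounded datum via finite propagation, this becomes $u^\eps(0,0,1)\ge c\eps^{1/3}$. The exponent comes from balancing the growth $pt$ against the mismatch $p^3t\lesssim1$, not from $\eps/\delta+\delta^2$.
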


In two dimensions, we also have that the power $1/3$ is optimal, up to a logarithmic factor.

\begin{theorem}\label{thm:2D-optimal}
Assume $n=2$.  There exist a Hamiltonian
$H\in C^{0,1}(\R^2\times\R^2)$, which is $\Z^2$-periodic in its first
variable and coercive and nonconvex in its second, and an initial datum
$g\in\BUC(\R^2)\cap\Lip(\R^2)$ such that, for some $c,\eps_0>0$,
\[
 \|u^\eps-u\|_{L^\infty(\R^2\times[0,1])}
 \geq
 c\,\frac{\eps^{1/3}}
 {\log^{4/3}(e+\eps^{-1})}
 \qquad (0<\eps\leq\eps_0).
\]
In particular, for every $\theta>1/3$,
\[
 \lim_{\eps\to0}
 \frac{\|u^\eps-u\|_{L^\infty(\R^2\times[0,1])}}
 {\eps^\theta}=+\infty.
\]
\end{theorem}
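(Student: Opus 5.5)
The plan is to build a two-dimensional example out of a lower-bound mechanism in which the error is the sum of two competing terms, and then optimize one parameter against $\eps$. The loss of the full $\eps^{1/3}$ (by a logarithmic factor) should come from the fact that in $n=2$ there is less room to route trajectories than in $n=3$. I take the three-dimensional construction behind Theorem~\ref{main-thm} as the model. The natural mechanism for an $\eps^{1/3}$ lower bound in the nonconvex setting is that $\Hbar$ has a nonsmooth, ``kink''-type feature near some slope $p_0$, for example a flat piece or a corner produced by a min of two convex branches. The solution $u$ of \eqref{eq:eff} then develops a rarefaction or shock structure, while $u^\eps$ at scale $\eps$ must use microscopic paths whose cost differs from the homogenized cost by a nontrivial amount over time scales of order $\eps^{2/3}$. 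Concretely, I would take
\[
H(y,p)=\min\{H_1(y,p),H_2(y,p)\},
\]
with $H_1,H_2$ convex and coercive. Then $u^\eps$ admits a differential-game/min–max representation: the player chooses between dynamics, and the error analysis reduces to comparing optimal switching at scale $\eps$ with optimal switching at the homogenized level. I would use initial data $g(z)=p_0\cdot z$ cut off to be bounded, or $g$ piecewise linear with slopes $p_0\pm$ small. That way the value $u(0,1)$ is given explicitly by a Hopf-type formula.

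The key steps are as follows.

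First, I would choose $H_1$ to be a Hamiltonian of metric type, $|p|^2/2 - V(y)$ or a Finsler norm with a periodic obstacle. For it I know the sharp one-sided cell behavior: the metric problem error is $O(\eps)$ in one direction. In two dimensions, however, the cell solution of the metric problem has an $\eps\log(1/\eps)$-type deviation, coming from the Alexander/Burago-type estimates for planar periodic metrics. This is where the $\log$ enters.

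Second, I would design $H_2$ so that the optimal strategy for $u^\eps$ spends a time $\tau$ in the $H_1$ regime and then switches. The homogenized problem gains a benefit of order $\tau^2$ from a quadratic structure near the kink. The microscopic problem instead incurs a corrector cost of order $\eps/\tau$ per switch, since it must align with the periodic structure over a window of length $\tau$. It may also incur a deviation term of order $\eps\log(\tau/\eps)/\tau$ in the planar case.

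Third, I would prove a lower bound of the form
\[
u^\eps(0,1)-u(0,1)\ \ge\ c\,\min_{\tau}\Bigl(\tau^2+\frac{\eps\log^2(1/\eps)}{\tau}\Bigr)
\]
or a variant. I would do this by testing the min–max formula with a specific strategy for the maximizing player, which forces the other player to pay one of the two terms. Optimizing gives $\tau\sim(\eps\log^2)^{1/3}$, and the error is then of order $\eps^{2/3}$ divided by powers of the logarithm. This is not yet the claimed rate, so the scaling must be arranged differently. Matching the statement instead requires error $\sim\tau$ against a penalty $\eps/\tau^2$ with logarithmic losses, which yields $\eps^{1/3}\log^{-4/3}$. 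Indeed, choosing $\tau^3=\eps/\log^4(1/\eps)$ gives exactly $\tau=\eps^{1/3}\log^{-4/3}$. So the real task is to engineer the linear-versus-$\eps/\tau^2$ tradeoff with a $\log^4$ loss in the microscopic penalty. I would aim for that scaling: a linear gain from a corner of $\Hbar$, and a quadratic-in-$1/\tau$ correction arising because, in 2D, the microscopic path must make $\sim 1/\tau$ crossings each costing $\eps/\tau$.

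The main obstacle will be the quantitative lower bound on the microscopic cost in two dimensions. Upper bounds on $u^\eps$ follow from exhibiting paths. Lower bounds, by contrast, require controlling all admissible paths, and planar periodic geometry only allows estimates up to logarithmic losses; this is precisely why the statement carries $\log^{4/3}$. I would try first a dyadic decomposition of the trajectory into scales between $\eps$ and $\tau$. At each scale I would bound the excess cost from below by a constant times $\eps$, using a planar crossing argument that exploits the Jordan curve property, so that paths cannot avoid barriers. I would then sum over the $\log(\tau/\eps)$ scales. The final claim about $\theta>1/3$ is immediate from the displayed bound, since $\eps^{1/3-\theta}\log^{-4/3}(e+\eps^{-1})\to\infty$.
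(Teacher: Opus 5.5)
Your proposal has a genuine gap: it never constructs $H$, never computes $\Hbar$ or $u$, and its quantitative core is reverse-engineered from the target rate. Your first tradeoff, $\tau^2+\eps\log^2(1/\eps)/\tau$, gives $\eps^{2/3}$ up to logarithms, so you postulate a different one. That second tradeoff is also inconsistent. Balancing $\tau$ against a penalty $\eps\log^4(1/\eps)/\tau^2$ gives $\tau\sim\eps^{1/3}\log^{4/3}(1/\eps)$. Your choice $\tau^3=\eps/\log^4(1/\eps)$ instead corresponds to a penalty smaller by a factor $\log^4$, i.e.\ a gain rather than a loss.

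The claimed source of the logarithm is also wrong. For convex, metric-type Hamiltonians the optimal rate is $O(\eps)$ in every dimension \cite{TY2022}, so there is no intrinsic planar $\eps\log(1/\eps)$ deviation to exploit. The ``main obstacle'' you identify does not arise either. What is needed is a lower bound on $u^\eps$, which follows directly from the comparison principle once you have an explicit subsolution, together with exact knowledge of $u$; no Jordan-curve or dyadic path-crossing argument is required. Finally, the mechanism is not a kink of $\Hbar$ creating shocks or rarefactions: in the actual example $u$ is stationary.

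The missing idea is a two-mode switching construction. Work in $(x,y)\in\R\times\T$ with momentum $(p,q)$ and set $H=(|q|-\kappa)_+-h(y,p)-4r(y)+(|p|-1)_+$. Here $h$ interpolates in $y$ between two nonnegative tent functions $h_0,h_1$ with disjoint supports, so $\min\{h_0,h_1\}=0$.

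\emph{Flat effective Hamiltonian.} The property $\min\{h_0,h_1\}=0$ alone yields a corrector depending only on $y$. Hence $\Hbar(p,0)=0$ for $|p|\le1$, so $u=g=\min\{|x|,4\}$ and $u(0,0,1)=0$.

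\emph{Explicit subsolutions.} The functions $w_{i,j}=\max\{|x|,\max_{c\in K_{i,j}}(R_j(c)t+cx)\}$ are subsolutions of $w_t-h_i(w_x)=0$ with $w_{i,j}(0,t)\ge\delta_jt/2$. With $\kappa=4\mu$, the envelope $\max_i\{w_{i,j}-\kappa d_i(y)\}$ is a subsolution of the full equation as long as $|w_{0,j}-w_{1,j}|\le\mu$.

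\emph{Compatibility and growth.} Interlace the tent centers and use parabolic heights $R_j$ on an interval $J_j$ of half-length $\ell_j$; this is the one-dimensional substitute for the curvature of the circles in the three-dimensional example. It gives $|w_{0,j}-w_{1,j}|\le 64\delta_j^3t/\ell_j^2$. So the subsolution is valid up to $T_j\sim\ell_j^2/\delta_j^3$, and it yields $U(0,\gamma_i,t)\gtrsim t^{2/3}\ell_j^{2/3}$.

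\emph{Origin of the logarithm.} The logarithm is a packing loss in one-dimensional momentum space. The intervals $J_j$ must be pairwise disjoint inside $(0,1/2)$, so $\sum_j\ell_j<\infty$. With $\ell_j=\ell_*/(j+1)^2$ and $j\lesssim\log(e+t)$ this gives $t^{2/3}/\log^{4/3}(e+t)$. Rescaling gives $\eps^{1/3}/\log^{4/3}(e+\eps^{-1})$.
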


Concerning the one-dimensional case, we show that there is a Lipschitz selection of correctors, which implies the $O(\eps^{1/2})$ convergence rate.

\begin{theorem}\label{thm:1D-Lip}
Assume $n=1$.
Let $H \in C(\R\times\R,\R)$ be $\Z$-periodic in its first variable and coercive in its second.  
Then one can select, simultaneously for all $p\in\R$, periodic viscosity solutions $v(\cdot,p)$ of 
\begin{equation}\label{eq:cell}
 H\bigl(y,p+v_y(y,p)\bigr)=\Hbar(p)
 \qquad\text{in }\T,
\end{equation}
normalized by
\[
 v(0,p)=0,
\]
so that for every $p<q$,
\begin{equation}\label{eq:slope-order}
 p+v_y(\cdot,p)\le q+v_y(\cdot,q)
 \qquad\text{a.e. on }\T.
\end{equation}
In particular,
\begin{equation}\label{eq:Lip-selection}
 \norm{v(\cdot,p)-v(\cdot,q)}_{L^\infty(\T)}
 \le \abs{p-q}
 \qquad\text{for all }p,q\in\R.
\end{equation}

Assume further that $H \in C^{0,1}(\R\times\R,\R)$.
Then, for $\eps\in (0,1)$ and $T>0$, there exists a constant $C=C(H,\|Dg\|_{L^\infty},T)$ such that the solutions of \eqref{eq:micro}--\eqref{eq:eff} satisfy
 \[
  \|u^\eps-u\|_{L^\infty(\R\times[0,T])}
  \leq C\eps^{1/2}.
 \]
\end{theorem}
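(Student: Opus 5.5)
We construct the correctors explicitly from the one-dimensional structure of the cell problem, and then run the standard doubling-variables argument with the Lipschitz-in-$p$ corrector family.

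\emph{Step 1 (monotone selection).} In one dimension, a periodic viscosity solution of \eqref{eq:cell} has derivative $w=p+v_y$ satisfying $H(y,w(y))=\Hbar(p)$ a.e., with $\int_0^1 w\,dy=p$. For a level $c$, let
\[
\psi_c^+(y)=\max\{\xi: H(y,\xi)= c\},\qquad \psi_c^-(y)=\min\{\xi: H(y,\xi)= c\},
\]
whenever $c\ge \max_y\min_\xi H(y,\xi)=:c_0$. The classical 1D description is as follows. For $p$ with $\Hbar(p)>c_0$, the slope $w$ is $\psi^+_{\Hbar(p)}$ (if $p\ge p_0^+$) or $\psi^-_{\Hbar(p)}$. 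On the flat piece $\Hbar=c_0$, the slope $w$ may be chosen to switch between branches at points where $\min_\xi H(y,\xi)=c_0$, or to follow intermediate branches, so as to adjust the mean continuously.

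I would rather avoid this case analysis and define the selection via a monotone construction. For each $p$, let
\[
w_p=\sup\{\text{admissible slope functions with mean }p\},
\]
where the admissible functions are chosen inside the level sets $\{H(y,\cdot)=\Hbar(p)\}$ together with the viscosity constraint (downward jumps only where allowed, i.e.\ semiconcavity-type conditions). Concretely, I would take $w_p$ as follows. For $p$ in the outer regions, $w_p=\psi^\pm_{\Hbar(p)}$; these are monotone in $p$ because $\Hbar$ is monotone there and $\psi^+_c$ increases (resp.\ $\psi^-_c$ decreases) in $c$, up to a.e.\ issues handled via upper/lower semicontinuous envelopes. On the middle interval $[p_0^-,p_0^+]$, I would use a threshold interpolation: $w_p=\psi^-_{c_0}$ on $[0,s)$ and $\psi^+_{c_0}$ on $[s,1)$, with the switch located at a point $y_0$ where $\min_\xi H(y_0,\xi)=c_0$, so that the two branches meet there and the viscosity condition holds. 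Here $s$ moves monotonically so that the mean equals $p$. More carefully, I would switch on sets $\{y:\,\cdot\}$ chosen by a monotone family, for example by replacing $\psi^-$ with $\psi^+$ on an increasing family of sets, the mean being continuous in the parameter. Then \eqref{eq:slope-order} follows because each ingredient is pointwise monotone in $p$. Setting $v(y,p)=\int_0^y(w_p-p)$ gives $v(0,p)=0$.

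For \eqref{eq:Lip-selection}, take $p<q$ and $y\in[0,1]$. We have
\[
v(y,q)-v(y,p)=\int_0^y (w_q-w_p)\,dy' - (q-p)y.
\]
The integral lies in $[0,\int_0^1(w_q-w_p)]=[0,q-p]$ because $w_q-w_p\ge0$. Hence the difference lies in $[-(q-p),\,q-p]$.

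\emph{Step 2 (rate).} With the corrector family Lipschitz in $p$ (and $v$ Lipschitz in $y$ uniformly for $p$ bounded, by coercivity), I run the perturbed test function argument. Consider
\[
\Phi(x,y,t)=u^\eps(x,t)-u(y,t)-\eps v\!\left(\tfrac{x}{\eps},\tfrac{x-y}{\delta}\right)-\frac{|x-y|^2}{2\delta}-Kt,
\]
where the natural slope is $p=(x-y)/\delta$. Differentiating $v$ in $p$ is not possible, but Lipschitz dependence lets us freeze $p$ at the maximizer: the error from varying $p$ is $\eps|\Delta p|\le \eps|x-y|/\delta$. Balancing the errors $\eps/\delta$ against $\delta$ (from $|x-y|\lesssim\delta$ and the Lipschitz bounds) gives $\delta=\eps^{1/2}$ and the rate $C\eps^{1/2}$.

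The main obstacle is this doubling argument: one must handle the non-differentiability of $v$ in $p$. I would double the $p$-variable too, or use sup/inf-convolution in $p$, so that the viscosity inequalities can be tested with $p$ frozen. The other delicate point is verifying the viscosity (not just a.e.) property of the interpolated correctors on the flat piece of $\Hbar$, which is where the switch point must be chosen at a maximizer of $\min_\xi H(\cdot,\xi)$.
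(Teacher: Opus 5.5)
There is a genuine gap in Step 1. Your derivation of \eqref{eq:Lip-selection} from \eqref{eq:slope-order} is correct; it is the same integral argument as the paper's Lemma~\ref{lem:discLip}. Step 2 is the standard argument the paper simply cites from \cite[Theorem 4.40]{Tran}.

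\textbf{The gap.} Your description of correctors is the picture for (quasi)convex $H(y,\cdot)$: two branches $\psi^\pm_c$ of the level set, a single flat piece of $\Hbar$ at level $c_0=\max_y\min_\xi H(y,\xi)$, and slopes $\psi^\pm_{\Hbar(p)}$ off that piece. For general coercive nonconvex $H$, which is the point of the theorem, this picture breaks down. The level sets $\{H(y,\cdot)=c\}$ have several components, correctors can use interior branches and switch between them, and $\Hbar$ can have several flat pieces.

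A concrete counterexample is $H(y,\xi)=\min\{|\xi-1|,|\xi+1|\}$. Here $\Hbar=H$, and for $p=0$ the corrector is $v\equiv 0$. Its slope $0$ is the middle point of $\{H=1\}=\{-2,0,2\}$, not $\psi^\pm_1=\pm2$. Your recipe instead places $p=0$ in the middle interval $[p_0^-,p_0^+]=[-1,1]$ and predicts $\Hbar(0)=c_0=0$. The threshold function switching between slopes $-1$ and $1$ then fails the subsolution inequality at its downward kink, since $H(\cdot,0)=1>0$.

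Your abstract fallback, $w_p=\sup$ of admissible slope functions of mean $p$, does not repair this. A pointwise supremum of functions with mean $p$ generally has mean larger than $p$. Also, the kink conditions of viscosity solutions are not preserved under suprema. So $w_p$ need be neither admissible nor of mean $p$. The pointwise order \eqref{eq:slope-order}, which is the entire content of the selection statement, is therefore only obtained in the convex case, where your switching construction can indeed be made to work.

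\textbf{The missing idea.} What is needed is a comparison principle for slopes that uses no knowledge of the structure of $\Hbar$. The paper passes to the viscous discounted problem. There $W=p+(v_p^{\la,\nu})'$ solves $-\nu W''+\frac{d}{dy}H(y,W)+\la W=\la p$, with the explicit $y$-dependence inside an exact derivative. Suppose $W_p-W_q$ were positive on a component $(a,b)$ for some $p<q$. Integrating over $(a,b)$, the flux terms cancel because $W_p=W_q$ at $a$ and $b$. The left side is then positive, while the right side is $\la(p-q)(b-a)<0$, a contradiction.

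This order then carries through the limits:
\begin{itemize}
\item it passes to $\nu\to0$ in distributions;
\item at fixed $\la$ it gives the exact bound $|\widetilde v^\la(\cdot,p)-\widetilde v^\la(\cdot,q)|\le|p-q|$;
\item that bound supplies the joint equicontinuity in $(y,p)$ needed to extract a single sequence $\la_j\to0$ that converges for all $p$ simultaneously;
\item a final smoothing argument handles merely continuous $H$.
\end{itemize}
To keep an explicit approach instead, you would need the full one-dimensional nonconvex structure theory of the cell problem, plus a pointwise monotone parametrization of correctors through all of its regimes. That is considerably harder than what you sketched.
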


We have that this $O(\eps^{1/2})$ rate is optimal in one dimension; see Theorem \ref{thm:1D-optimal}.

\subsection{Relevant literature}
Qualitative periodic homogenization of Hamilton--Jacobi equations was proved in \cite{LPV, Evans1992}.
For general nonconvex, globally Lipschitz Hamiltonians, the classical quantitative theory \cite{CDI} gives an $O(\eps^{1/3})$ convergence rate.
If there exists a Lipschitz selection of correctors, then the convergence rate is upgraded to $O(\eps^{1/2})$ (see \cite[Theorem 4.40]{Tran}).
It has been a long-standing open problem to determine the optimal convergence rates in general nonconvex settings.

\smallskip

By now, the convex setting is well understood, with the optimal convergence rate $O(\eps)$ obtained in \cite{TY2022} by using the optimal control formula and the subadditivity and superadditivity of the metric functions.
The superadditivity was proved thanks to the equal curve cutting lemma in \cite{Burago}.
Earlier, \cite{MTY} proved an optimal lower bound in all dimensions, an optimal upper bound for positively homogeneous Hamiltonians in two dimensions, and a rather restrictive conditional upper bound in all dimensions by using
optimal control formulas, weak KAM theory, and Aubry--Mather theory;
and \cite{Coop1} obtained a near optimal rate $O(\eps|\log\eps|)$ through the optimal control framework in \cite{MTY} combined with Alexander’s theorem from first passage percolation.
Since then, the subadditivity and superadditivity of the metric functions' viewpoint has been adapted to multiscale and space-time periodic Hamiltonians \cite{HanJang, HNguyen}, to the state-constraint, Neumann, and Dirichlet problems on perforated domains \cite{HJMT, MiNi1, HanTu}, to Hamiltonians that are periodic in the unknown with applications to dislocation \cite{MNT}, and to some weakly coupled systems \cite{MiNi2}.
This viewpoint also works in the infinite-dimensional settings with the optimal $O(\eps)$ rate obtained under some appropriate conditions: see \cite{DEHZ} for the results in the Wasserstein space $\mathcal{P}_2(\R^d)$ and \cite{Park2} for the result in the Hilbert space $L^2([0,1]^d,\R^d)$.

For other recent quantitative homogenization results related to the optimal control and metric problem frameworks, we refer the reader to \cite{Tu, JTY, HTZ, GJTZ}.
For the nonconvex setting in the Hilbert space $L^2([0,1]^d,\R^d)$ with rearrangement invariance, an $O(\eps^{1/3})$ rate was obtained in \cite{Park1}.
For the second-order equations, an $O(\eps^{1/3})$ rate was obtained in \cite{CCM}; and in the uniformly elliptic case where there is a Lipschitz selection of correctors, an $O(\eps^{1/2})$ convergence rate was proved in \cite{QSTY,Jang}.
For viscous quadratic Hamilton-Jacobi equations, the sharp $O(\eps|\log \eps|)$ rate was obtained in \cite{LTY1}.

\smallskip

For general nonconvex settings, \cite{MTY} conjectures that the optimal rate is $O(\eps^{1/2})$ in one dimension.
Afterwards, \cite{Tran-conj} made a bolder conjecture that the optimal rate is $O(\eps^{1/2})$ in all dimensions.
\cite{Coop2} gave an example where the convergence rate is $\Omega(\eps^{1/2})$ in two dimensions.
The main results in this paper resolve the conjectures in \cite{MTY,Tran-conj} fully as follows.
\begin{itemize}
    \item In one dimension, Theorems \ref{thm:1D-Lip} and \ref{thm:1D-optimal} confirm the conjectures in \cite{MTY,Tran-conj}, and that the optimal convergence rate is $O(\eps^{1/2})$.
    \item In two dimensions and higher, Theorems \ref{main-thm} and \ref{thm:2D-optimal} disprove the conjecture in \cite{Tran-conj}.
    Rather surprisingly, Theorems \ref{main-thm} and \ref{thm:2D-optimal} confirm that the convergence exponent $1/3$ obtained in \cite{CDI} is optimal in dimension two or higher.
\end{itemize}

\subsection{Open problems}
It is rather intriguing that the optimal convergence rate is $O(\eps)$ in the convex case, and is only $O(\eps^{1/3})$ in the general nonconvex case in dimension three or higher.
It is unclear to us yet about the gap in terms of exponent between $1/3$ and $1$.
We list here several open problems that are of interest.

\begin{Op}\label{Op1}
Assume that $H\in C^{0,1}(\R^n\times\R^n)$, which is periodic in its first variable and coercive and level-set quasiconvex in its second.
What are the optimal convergence rates of the homogenization problem in this case?
\end{Op}

\begin{Op}\label{Op2}
Let $n=2$.  
Assume $H\in C^{0,1}(\R^2\times\R^2)$, which is $\Z^2$-periodic in its first
variable and coercive and nonconvex in its second, and $g\in\BUC(\R^2)\cap\Lip(\R^2)$.
Is it true that
\[
 \|u^\eps-u\|_{L^\infty(\R^2\times[0,1])}= o(\eps^{1/3})\,?
\]

\end{Op}

\subsection*{Organization of the paper}
We prove Theorem \ref{main-thm} in Section \ref{sec:3D}.
In Section \ref{sec:2D}, we consider the two-dimensional case and give the proof of Theorem \ref{thm:2D-optimal}.
The one-dimensional case and the proofs of Theorems \ref{thm:1D-Lip} and \ref{thm:1D-optimal} are given in Section \ref{sec:1D}.
In Appendix \ref{appendix}, we give an example in two dimensions to show that there is no continuous selection of correctors in two dimensions.
Because of this, it is not possible to use the approach of \cite{CDI} directly to obtain the convergence rate $o(\eps^{1/3})$ posed in Open Problem \ref{Op2}.

\subsection*{AI assistance}
During an early exploratory stage, we used OpenAI’s GPT-5.6 Sol, through a series of chats, to suggest possible choices of Hamiltonians. 
These exchanges provided some preliminary directions, but none of the first AI-generated constructions or arguments was used in the final manuscript. 
The final constructions and all mathematical arguments were developed, written, and verified by the authors. 
The authors take full responsibility for the correctness and content of the paper.

\section{The three-dimensional case}\label{sec:3D}
We give the proof of Theorem \ref{main-thm} in this section.
Note that the bound $\|u^\eps-u\|_{L^\infty(\R^3\times[0,1])}
  \leq C\eps^{1/3}$ was already obtained in \cite{CDI,Tran}.
\subsection{Initial constructions}
Fix $p_1 \in (0,10^{-3})$ and set
\[
 p_j=2^{1-j}p_1,\qquad
 a_j=\frac{1}{4}+p_j<\frac12,\qquad
 \rho_j=\frac{p_j}{100}
 \qquad(j\geq1).
\]
For $j\in \N$, fix an even integer $N_j$ such that
\[
 N_j \in \left[\frac1{20p_j},\frac1{10p_j}\right].
\]
As the length of each interval is at least $1/(20p_1) \geq 50$, the existence of $N_j$ is guaranteed.
On the circle of radius $a_j$, define two interlaced sets of discrete points in $\R^2$:
\begin{align}
 K_{0,j}
 &=
 \left\{
 a_j\left(\cos\frac{2\pi k}{N_j},
          \sin\frac{2\pi k}{N_j}\right):
 0\leq k<N_j
 \right\},                                               \label{K0}\\
 K_{1,j}
 &=
 \left\{
 a_j\left(\cos\frac{2\pi(k+1/2)}{N_j},
          \sin\frac{2\pi(k+1/2)}{N_j}\right):
 0\leq k<N_j
 \right\}.                                               \label{K1}
\end{align}
Because $N_j$ is even, both sets are centrally symmetric.


For $c,P\in\R^2$ and $\rho>0$, let
\[
 \tau_{c,\rho}(P)=(\rho-|P-c|)_+.
\]
Define, for $i=0,1$ and $P\in \R^2$,
\begin{equation}
 h_i(P)=
 \sum_{j=1}^{\infty}\ \sum_{c\in K_{i,j}}
 \tau_{c,\rho_j}(P),                                           \label{hi-def}
\end{equation}
which will play an important role in constructing the Hamiltonian $H$.
\begin{lemma}\label{packing}
The functions $h_0,h_1$ are nonnegative, bounded, even, and globally one-Lipschitz.  Moreover,
\begin{align}
 \min\{h_0(P),h_1(P)\}&=0 &&(P\in\R^2),                  \label{minzero}\\
 h_i(c)&=\rho_j &&(c\in K_{i,j}),                        \label{peak}\\
 h_i(P)&=0 &&(|P|\geq\tfrac12).                          \label{outerzero}
\end{align}
\end{lemma}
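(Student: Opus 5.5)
The whole proof will follow from one geometric fact: all the little disks $B(c,\rho_j)$, with $c\in K_{0,j}\cup K_{1,j}$ and $j\ge1$, are pairwise disjoint and lie in $\{|P|<1/2\}$. Each tent $\tau_{c,\rho_j}$ is supported in $\overline{B(c,\rho_j)}$. So once disjointness is known, at most one term of the double series in \eqref{hi-def} is nonzero at any point $P$.

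\emph{Step 1: disjointness.} Two disks on the same circle $|P|=a_j$ have centers that are adjacent points of $K_{0,j}\cup K_{1,j}$, and they are at distance
\[
2a_j\sin\frac{\pi}{2N_j}\ge \frac{2a_j}{\pi}\cdot\frac{\pi}{2N_j}\cdot 2=\frac{2a_j}{N_j}\ge \frac{2\cdot\frac14}{1/(10p_j)}=5p_j .
\]
Here I use $\sin x\ge 2x/\pi$ on $[0,\pi/2]$. This exceeds $2\rho_j=p_j/50$, so these disks are disjoint. For disks on different circles $j<k$, the radii differ by $a_j-a_k=p_j-p_k\ge p_j/2$. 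This exceeds $\rho_j+\rho_k\le 2p_j/100$, so the disks lie in disjoint annuli. Since $a_j+\rho_j\le \frac14+\frac{101}{100}p_1<\frac12$, all disks lie inside $\{|P|<\frac12\}$, and their closures stay away from $|P|=1/2$. This is the only step needing actual estimates, and those estimates are elementary.

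\emph{Step 2: the listed properties.}
\begin{itemize}
\item Nonnegativity is immediate, since each tent is nonnegative.
\item Boundedness holds because at each point only one term is nonzero, so $0\le h_i\le \rho_1$.
\item \eqref{outerzero} follows from the support statement in Step 1.
\item \eqref{peak} holds because at $c\in K_{i,j}$ the only nonzero term is $\tau_{c,\rho_j}(c)=\rho_j$.
\item \eqref{minzero} holds because the disks centered in $K_{0,\cdot}$ and those centered in $K_{1,\cdot}$ are disjoint. So at every $P$, at least one of $h_0(P)$, $h_1(P)$ is a sum of zeros.
\item Evenness: $N_j$ is even, so $c\mapsto -c$ permutes $K_{i,j}$. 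Also $\tau_{c,\rho}(-P)=\tau_{-c,\rho}(P)$, so reindexing the sum gives $h_i(-P)=h_i(P)$.
\end{itemize}

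\emph{Step 3: the global $1$-Lipschitz bound.} This is the only mildly delicate point, because a sum of $1$-Lipschitz functions is not automatically $1$-Lipschitz. Take $P,Q$ and the segment $[P,Q]$. The function $h_i$ restricted to the segment is a finite or countable concatenation: on each disk the segment crosses, $h_i$ equals a single $1$-Lipschitz tent, and outside all disks it equals $0$. Each tent vanishes on the boundary of its disk, so $h_i$ is continuous.

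For the Lipschitz estimate I split into cases. If $P$ and $Q$ lie in the same closed disk, the bound is immediate. Otherwise, let $P'$ be the exit point of the segment from the disk containing $P$, if there is one, and $Q'$ the entry point into the disk containing $Q$. Then
\[
|h_i(P)-h_i(Q)|\le |h_i(P)-h_i(P')|+|h_i(Q')-h_i(Q)|\le |P-P'|+|Q'-Q|\le|P-Q|,
\]
using $h_i(P')=h_i(Q')=0$ and the fact that the points $P,P',Q',Q$ lie in this order on the segment. Alternatively, one can argue that $h_i$ is locally a single tent or zero away from a null set, so $|Dh_i|\le1$ a.e., and then a.e. segments in the Lipschitz argument suffice. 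I would use the direct segment argument above.
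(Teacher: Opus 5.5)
Your proof is correct and follows the paper's argument. The same two estimates drive it: adjacent vertices of the regular $2N_j$-gon are at distance $\ge 5p_j>2\rho_j$, and $a_j-a_k\ge p_j/2>\rho_j+\rho_k$ for $k>j$. Together they show that all closed balls are disjoint and lie in $\{|P|<1/2\}$, and every listed property is read off from that. Your segment argument for the global $1$-Lipschitz bound spells out what the paper asserts in one line (equivalently, since at most one bump is positive at each point, $h_i$ is the pointwise supremum of $1$-Lipschitz bumps), and it also makes your separate continuity remark unnecessary.
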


\begin{proof}
For a fixed $j$, the union $K_{0,j}\cup K_{1,j}$ is a regular
$2N_j$-gon.  
The distance between two consecutive points in this union is
\[
 2a_j\sin\frac{\pi}{2N_j}
 \geq \frac{2a_j}{N_j}
 \geq 20a_jp_j
 \geq 5p_j.
\]
Here, we used $\sin s\geq2s/\pi$ on $[0,\pi/2]$ and
$N_j\leq1/(10p_j)$.  
This distance is greater than $2\rho_j$.

If $k>j$, then
\[
 a_j-a_k=p_j-p_k\geq\frac{p_j}{2},
\]
whereas
\[
 \rho_j+\rho_k\leq\frac{3 p_j}{200}<\frac{p_j}{2}.
\]
Consequently, all closed balls
\[
 \overline B_{\rho_j}(c),
 \qquad c\in K_{0,j}\cup K_{1,j},\quad j\geq1,
\]
are pairwise disjoint; see Figure \ref{fig:closed-balls-discrete-rings}. They are contained in $\{|P|<a_1+\rho_1\}\subset\{|P|<1/2\}$.

\begin{figure}[htbp]
    \centering
    \includegraphics[width=\linewidth]
        {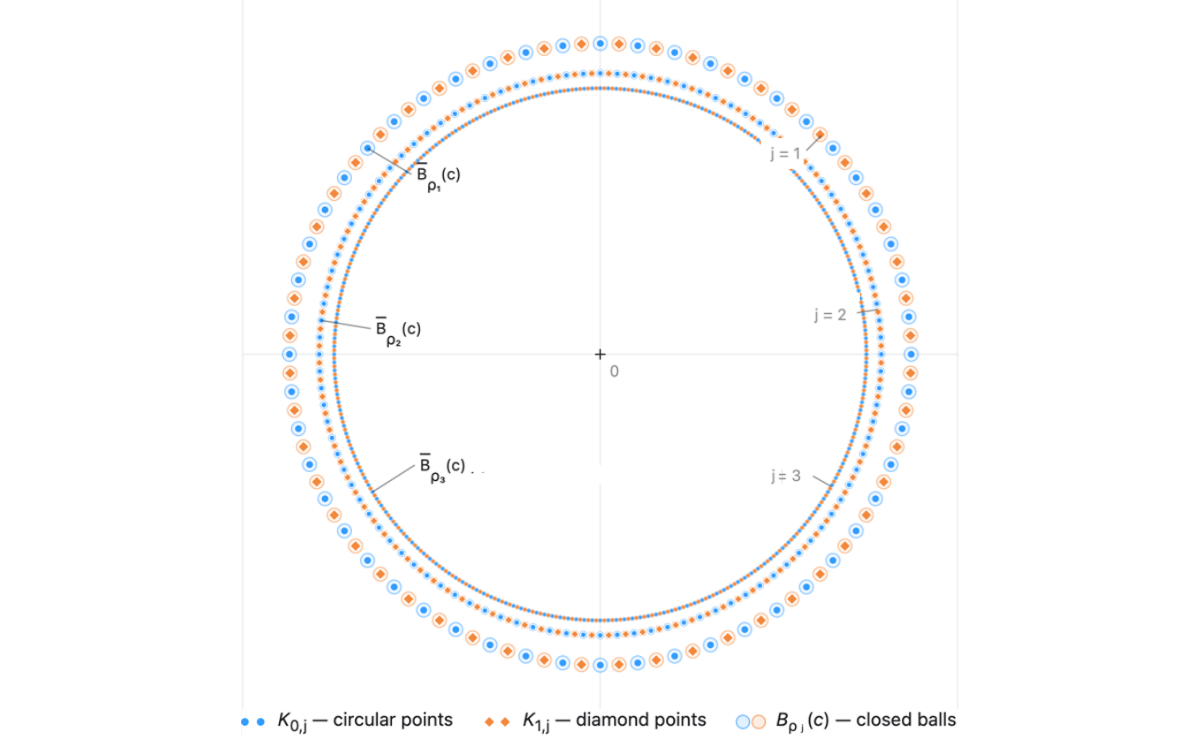}
    \caption{The closed balls
    $\overline{B}_{\rho_j}(c)$ centered at
    $c\in K_{0,j}\cup K_{1,j}$ for $j=1,2,3$.
    Radial offsets are enlarged by $60$ and ball radii by $700$
    for visibility.}
    \label{fig:closed-balls-discrete-rings}
\end{figure}

Thus at most one term in the two sums \eqref{hi-def} is positive at any point.  
This immediately proves \eqref{minzero}--\eqref{outerzero}.  Each cone bump $\tau_{c,\rho_j}$ is one-Lipschitz, and thus, $h_i$ is globally one-Lipschitz.
Finally, central symmetry of every $K_{i,j}$ proves that $h_i$ is even.
\end{proof}

For $i=0,1$, define the support function
\[
 \sigma_{i,j}(X)=\max_{P\in K_{i,j}}P\cdot X,
 \qquad X\in\R^2.
\]

\begin{lemma}
\label{support-close}
One has
\begin{align}
 a_j\cos\frac{\pi}{N_j}|X|
 \leq \sigma_{i,j}(X)
 \leq a_j|X|,                                            \label{support-bounds}\\
 |\sigma_{0,j}(X)-\sigma_{1,j}(X)|
 \leq100 \pi^2p_j^2|X|.                                \label{support-diff}
\end{align}
\end{lemma}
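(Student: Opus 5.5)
We compute $\sigma_{i,j}$ explicitly by writing $X$ in polar form. Both bounds are then elementary trigonometry for a regular polygon inscribed in the circle of radius $a_j$.

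Write $X=|X|(\cos\phi,\sin\phi)$; the case $X=0$ is trivial. For $P=a_j(\cos\theta_k,\sin\theta_k)\in K_{i,j}$ we have $P\cdot X=a_j|X|\cos(\theta_k-\phi)$. Hence
\[
 \sigma_{i,j}(X)=a_j|X|\max_k\cos(\theta_k-\phi)=a_j|X|\cos\delta_i(\phi),
\]
where $\delta_i(\phi)\in[0,\pi/N_j]$ is the angular distance from $\phi$ to the nearest angle of $K_{i,j}$. The distance is at most $\pi/N_j$ because consecutive angles in $K_{i,j}$ are spaced $2\pi/N_j$ apart. Since $\cos$ is decreasing on $[0,\pi]$ and $\pi/N_j<\pi$, this gives $\cos\frac{\pi}{N_j}\le\cos\delta_i(\phi)\le1$, which is \eqref{support-bounds}.

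For \eqref{support-diff}, both $\sigma_{0,j}(X)$ and $\sigma_{1,j}(X)$ lie in the interval $\bigl[a_j|X|\cos\frac{\pi}{N_j},\,a_j|X|\bigr]$ by the first step. Their difference is therefore at most the length of that interval:
\[
 a_j|X|\Bigl(1-\cos\frac{\pi}{N_j}\Bigr)\le a_j|X|\frac{\pi^2}{2N_j^2}.
\]
Here we used $1-\cos s\le s^2/2$. Now insert $N_j\ge 1/(20p_j)$, which gives $1/N_j^2\le 400p_j^2$, and $a_j<1/2$. The bound becomes
\[
 \frac12\cdot\frac{\pi^2}{2}\cdot400\,p_j^2|X|=100\pi^2p_j^2|X|,
\]
which is exactly the constant claimed.

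The only point needing care is the claim that the nearest angle of $K_{i,j}$ to $\phi$ is within $\pi/N_j$, and that the maximum of $\cos$ is attained at that nearest angle. Both follow from equal spacing on the circle together with the monotonicity of $\cos$ on $[0,\pi]$. There is no genuine obstacle; the rest is checking the constants.
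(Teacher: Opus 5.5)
Your proposal is correct and follows essentially the same route as the paper. You bound the angular distance to the nearest vertex by $\pi/N_j$ to get \eqref{support-bounds}. You then bound the difference by the length $a_j(1-\cos\frac{\pi}{N_j})|X|\le \frac{a_j\pi^2}{2N_j^2}|X|$ of the common interval and insert $N_j\ge 1/(20p_j)$ and $a_j<1/2$, arriving at the same constant $100\pi^2p_j^2$.
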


\begin{proof}
For any direction $X/|X|$, one of the vertices of either regular polygon has angular distance at most $\pi/N_j$ from that direction.  
This proves \eqref{support-bounds}.  
Both support functions $\sigma_{0,j},\sigma_{1,j}$ therefore lie in the interval $\left[
 a_j\cos\frac{\pi}{N_j}|X|,\ a_j|X|
 \right]$ and thus
\[
 |\sigma_{0,j}(X)-\sigma_{1,j}(X)|
 \leq a_j\left(1-\cos\frac{\pi}{N_j}\right)|X|\leq \frac{a_j\pi^2}{2N_j^2}|X|.
\]
Since $N_j\geq1/(20p_j)$ and $a_j=\frac{1}{4}+p_j<\frac12$,
\[
 |\sigma_{0,j}(X)-\sigma_{1,j}(X)|
 \leq100 \pi^2p_j^2|X|.
\]
\end{proof}

\subsection{Explicit subsolutions}

For $i=0,1$ and $j\geq1$, set
\begin{equation}
 w_{i,j}(X,t)
 =
 \max\left\{
 |X|,\ \rho_jt+\sigma_{i,j}(X)
 \right\}.
 \label{wij}
\end{equation}

\begin{lemma}\label{barriers}
For each $i,j$, the function $w_{i,j}$ is a viscosity subsolution of
\begin{equation}
 (w_{i,j})_t-h_i(D_Xw_{i,j})=0
 \qquad\hbox{in }\R^2 \times (0,\infty).                    \label{two-d-pde}
\end{equation}
It also satisfies
\begin{align}
 w_{i,j}(X,0)&=|X|,                                     \label{barrier-data}\\
 w_{i,j}(0,t)&=\rho_jt,                                  \label{barrier-center}\\
 \Lip_X w_{i,j}&\leq 1.                                  \label{barrier-lip}
\end{align}
\end{lemma}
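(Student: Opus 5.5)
The three identities \eqref{barrier-data}--\eqref{barrier-lip} are immediate, so I dispose of them first and spend the effort on the subsolution property. At $t=0$, \eqref{support-bounds} gives $\sigma_{i,j}(X)\le a_j|X|\le|X|$, hence $w_{i,j}(X,0)=|X|$. At $X=0$, $\sigma_{i,j}(0)=0$, hence $w_{i,j}(0,t)=\max\{0,\rho_jt\}=\rho_jt$. For the Lipschitz bound, $|X|$ is $1$-Lipschitz, and $\sigma_{i,j}$ is a maximum of linear maps with gradients $P\in K_{i,j}$, $|P|=a_j<1$, so it is $a_j$-Lipschitz. A maximum of $1$-Lipschitz functions is $1$-Lipschitz.

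For the subsolution property I use that $w_{i,j}$ is a finite maximum of the functions
\[
\phi_0(X,t)=|X|,\qquad \phi_P(X,t)=\rho_jt+P\cdot X\quad (P\in K_{i,j}).
\]
The operator $F(q,\xi)=q-h_i(\xi)$ is continuous. A pointwise maximum of finitely many continuous viscosity subsolutions is again a subsolution: if a smooth $\psi$ touches the maximum from above at $(X_0,t_0)$, it touches from above every piece that is active there. So it suffices to check that each piece is a subsolution on the set where it is active. Since the pieces are merely continuous, I will phrase this as: every piece attaining the max at $(X_0,t_0)$ is a subsolution in a neighborhood of that point. Equivalently, I check directly that every $(q,\xi)$ in the superdifferential $D^+w_{i,j}(X_0,t_0)$ satisfies $q\le h_i(\xi)$.

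\emph{Linear pieces.} For $\phi_P$ with $P\in K_{i,j}$ we have $(\phi_P)_t=\rho_j$ and $D_X\phi_P=P$. By \eqref{peak}, $h_i(P)=\rho_j$, so $(\phi_P)_t-h_i(P)=0$ and $\phi_P$ is a classical solution everywhere.

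\emph{Cone piece.} For $\phi_0=|X|$ we have $(\phi_0)_t=0$, and $h_i\ge0$ by Lemma \ref{packing}. At $X\ne0$ the inequality $0-h_i(X/|X|)\le0$ holds classically. At $X=0$ the cone has no smooth test function touching from above, so the condition there is vacuous. More precisely, the superdifferential of $|X|$ at $0$ is empty, so at such points it is the other pieces that carry the test. Indeed $w_{i,j}(0,t)=\rho_jt$, and for $t>0$ a linear piece is strictly active there.

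The main technical point is the superdifferential argument at points where several pieces tie, including the origin when $t>0$. There I argue as follows. Suppose $\psi$ touches $w_{i,j}$ from above at $(X_0,t_0)$, and suppose $\phi$ is a piece active at $(X_0,t_0)$ that is differentiable there. Then $\psi$ touches $\phi$ from above as well, so $\psi$ and $\phi$ have the same derivative at that point. In the case $X_0=0$, $t_0>0$, we have $|0|=0<\rho_jt_0$, so only linear pieces are active near that point and the conclusion follows. If $X_0\neq0$, every piece is differentiable at $(X_0,t_0)$, so the test reduces to a classical check on an active piece, which was done above. This completes the plan.
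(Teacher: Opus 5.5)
Your proposal is correct and follows the paper's argument. You write $w_{i,j}$ as a finite maximum of the cone $|X|$, which is a subsolution because $h_i\ge 0$ (its superdifferential at $X=0$ is empty), and of the affine functions $\rho_j t+c\cdot X$ with $c\in K_{i,j}$, which are classical solutions since $h_i(c)=\rho_j$; the initial trace, center value and Lipschitz bound then follow from $\sigma_{i,j}(X)\le a_j|X|\le |X|$ and $|c|=a_j<1$. Your extra discussion of tie points is harmless but redundant, since each piece is already a subsolution on all of $\R^2\times(0,\infty)$ and the max-of-subsolutions principle covers every contact point.
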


\begin{proof}
Note that
\[
 \rho_jt+\sigma_{i,j}(X)
 =
 \max_{c\in K_{i,j}}\{\rho_jt+c\cdot X\}.
\]
For every $c\in K_{i,j}$, the affine function
$\rho_jt+c\cdot X$ solves \eqref{two-d-pde} classically because
$h_i(c)=\rho_j$.  
Besides, as $h_i \geq 0$, $\varphi(X,t)=|X|$ is a viscosity subsolution of \eqref{two-d-pde}.
Hence, 
\[
w_{i,j}(X,t) =
 \max\left\{
 |X|,\ \rho_jt+\sigma_{i,j}(X)
 \right\}= 
 \max\left\{
 \varphi(X,t),\ \rho_jt+\sigma_{i,j}(X)
 \right\}
\]
is a viscosity subsolution of \eqref{two-d-pde}.

Since $a_j<1/2$, \eqref{support-bounds} gives
$\sigma_{i,j}(X)\leq a_j|X|\leq |X|$, proving
\eqref{barrier-data}.  Formula \eqref{barrier-center} is immediate. Combined with \eqref{support-diff}, all affine slopes have norm at most $1$, which proves
\eqref{barrier-lip}.
\end{proof}

\begin{lemma}\label{compatibility}
For $(X,t)\in \R^2\times [0,\infty)$,
\begin{equation}
 |w_{0,j}(X,t)-w_{1,j}(X,t)|
 \leq 2 \pi^2p_j^3t.                             \label{compat-est}
\end{equation}
Fix $\mu>0$ and define
\begin{equation}
 T_j=\frac{\mu}{2 \pi^2p_j^3}.                               \label{Tj}
\end{equation}
Then, $T_{j+1}=8T_j$ for $j\in \N$, and
\begin{equation}
 |w_{0,j}-w_{1,j}|\leq\mu
 \qquad\hbox{on }\R^2\times [0,T_j].                               \label{compat-window}
\end{equation}
\end{lemma}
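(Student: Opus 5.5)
The estimate \eqref{compat-est} comes from the fact that $\max\{|X|,\cdot\}$ is $1$-Lipschitz in its second argument, combined with the support function bound \eqref{support-diff} of Lemma \ref{support-close}; a cutoff argument then replaces $p_j^2|X|$ by a multiple of $p_j^3 t$. First I use the elementary inequality $|\max\{a,b\}-\max\{a,b'\}|\le |b-b'|$ with $a=|X|$, $b=\rho_jt+\sigma_{0,j}(X)$ and $b'=\rho_jt+\sigma_{1,j}(X)$. This gives $|w_{0,j}-w_{1,j}|\le|\sigma_{0,j}(X)-\sigma_{1,j}(X)|\le 100\pi^2p_j^2|X|$. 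That bound is only useful when $|X|$ is small compared with $t$.

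For large $|X|$, both maxima are attained by $|X|$, and the two functions then coincide. Indeed, \eqref{support-bounds} gives $\sigma_{i,j}(X)\le a_j|X|$, so $\rho_jt+\sigma_{i,j}(X)\le|X|$ as soon as $(1-a_j)|X|\ge\rho_jt$. Since $a_j<1/2$, this holds whenever $|X|\ge 2\rho_jt=p_jt/50$. In that region $w_{0,j}=w_{1,j}=|X|$, and the difference is zero.

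In the complementary region $|X|<p_jt/50$, the first bound gives
\[
|w_{0,j}-w_{1,j}|\le 100\pi^2p_j^2\cdot\frac{p_jt}{50}=2\pi^2p_j^3t,
\]
which is exactly \eqref{compat-est}. The only real point is to split according to whether $|X|$ dominates, and to check that the constants match ($100/50=2$).

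The remaining claims follow directly. Since $p_{j+1}=p_j/2$, we get $T_{j+1}=\mu/(2\pi^2p_j^3/8)=8T_j$. For $t\le T_j$, \eqref{compat-est} gives $|w_{0,j}-w_{1,j}|\le 2\pi^2p_j^3T_j=\mu$, which is \eqref{compat-window}. I do not expect any real obstacle; the only care needed is using $1-a_j>1/2$ so that the threshold is $2\rho_jt$ and the constant comes out as stated.
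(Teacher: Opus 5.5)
Your proposal is correct and follows the paper's proof essentially verbatim: the same elementary inequality for $\max\{|X|,\cdot\}$ combined with \eqref{support-diff} gives $100\pi^2p_j^2|X|$, and the same observation $\sigma_{i,j}(X)\le a_j|X|\le|X|/2$ restricts the nonzero case to $|X|\le 2\rho_jt=p_jt/50$. Your split by the size of $|X|$ is just the contrapositive of the paper's split on whether both maxima equal $|X|$, and the rest is the same direct computation.
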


\begin{proof}
Write $A=|X|$ and $B_i=\rho_jt+\sigma_{i,j}(X)$. The elementary inequality
\[
 |\max\{A,B_0\}-\max\{A,B_1\}|
 \leq |B_0-B_1|
\]
and Lemma~\ref{support-close} give
\begin{equation}
 |w_{0,j}-w_{1,j}|
 \leq100 \pi^2p_j^2|X|.                                \label{first-compat}
\end{equation}
If both maxima equal $A$, their difference is zero.  Otherwise, for at least one $i$ we have $A\leq B_i$, which is to say $|X|\leq\rho_jt+\sigma_{i,j}(X)$. Since $\sigma_{i,j}(X)\leq a_j|X| \leq |X|/2$, one has
\[
 |X|\leq 2 \rho_j t=\frac{p_jt}{50}.
\]
Combining this with \eqref{first-compat} proves \eqref{compat-est}. 
The rest follows from \eqref{Tj} and $p_{j+1}=p_j/2$.
\end{proof}

\begin{figure}[htbp]
    \centering
    \includegraphics[width=1.00\textwidth]{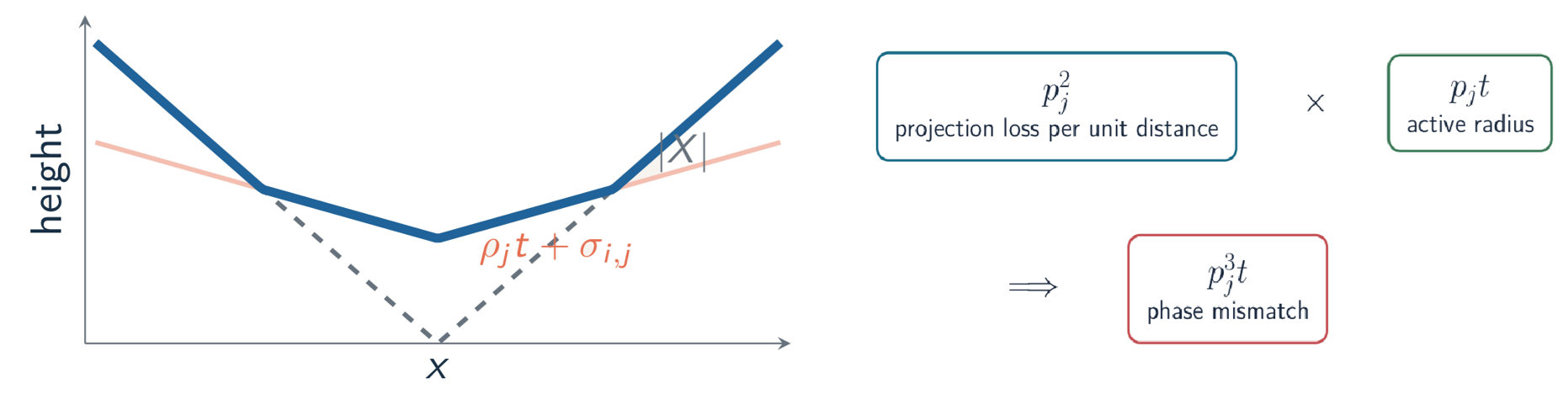}
    \caption{The graph of $w_{i,j}$ in blue and the power counting in Lemma \ref{compatibility}.}
    \label{fig:active_region}
\end{figure}

\subsection{Construction of $H$}
We now introduce the periodic variable $y\in\T$ to encode switching between two modes $h_0$ and $h_1$.

Choose $\gamma_0=0$, $ \gamma_1=1/2$, and write, for $i=0,1$,
\[
 d_i(y)=d_{\T}(y,\gamma_i),\qquad
 r(y)=\min\{d_0(y),d_1(y)\}.
\]
Then, for $y\in \T$,
\begin{equation}\label{distance-sum}
 d_0(y)+d_1(y)=\frac12.                              
\end{equation}
Let $\omega(y)=2d_0(y)$ and, for $y\in \T$ and $P\in \R^2$,
\[
h(y,P)=(1-\omega(y))h_0(P)+\omega(y)h_1(P).
\]
Thus, $h(\gamma_i,P)=h_i(P)$.  
It is clear that, for $y,y'\in\T$ and $P\in\R^2$,
\begin{align}
 &|h(y,P)-h(y',P)|
 \leq |\omega(y)-\omega(y')| (|h_0(P)|+|h_1(P)|) \notag\\
 \leq\,&  2d_{\T}(y,y') (2 \rho_1)\leq d_{\T}(y,y').                    \label{Ly}
\end{align}

Fix $\kappa=4\mu$.
For $\xi=(x_1,x_2,y)\in\R^3$ and momentum $(P,q)\in\R^2\times\R$, define the Hamiltonian
\begin{equation}
 H(\xi,(P,q))
 =
 (|q|-\kappa)_+
 -h(y,P)-4r(y)
 +(|P|-1)_+.
 \label{H-def}
\end{equation}
The first two components of $\xi$ do not occur in the formula, so $H$ is
$\Z^3$-periodic in $\xi$.
Define
\begin{equation}
 W_j(X,y,t)
 =
 \max_{i=0,1}
 \left\{
 w_{i,j}(X,t)-\kappa d_i(y)
 \right\}.
 \label{Wj}
\end{equation}

\begin{lemma}\label{envelope}
The function $W_j$ is a viscosity subsolution of
\begin{equation}
 (W_j)_t+H\big((X,y),(D_XW_j,(W_j)_y)\big)=0
 \qquad\hbox{in }\R^2\times\T \times (0,T_j).               
 \label{three-d-pde}
\end{equation}
Moreover,
\begin{align}
 W_j(X,y,0)&=|X|-\kappa r(y)\leq |X|,                  \label{W-data}\\
 W_j(X,\gamma_i,t)&=w_{i,j}(X,t)
 \quad(0\leq t\leq T_j,\ i=0,1).                        \label{W-trace}
\end{align}
\end{lemma}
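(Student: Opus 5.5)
Since a maximum of viscosity subsolutions is again a viscosity subsolution, it suffices to show that each piece
\[
 \Phi_i(X,y,t)=w_{i,j}(X,t)-\kappa d_i(y),\qquad i=0,1,
\]
is a viscosity subsolution of \eqref{three-d-pde} on $\R^2\times\T\times(0,T_j)$. The proof will not, however, be purely local for each $\Phi_i$ separately. Near $y=\gamma_{1-i}$ the function $d_i$ has a concave kink, but there $\Phi_i$ is dominated by $\Phi_{1-i}$. So I will check the subsolution inequality only at points where $\Phi_i$ realizes the maximum, i.e., where $W_j=\Phi_i$.

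\emph{Step 1 (active region).} Let $\phi$ touch $W_j$ from above at $(X_0,y_0,t_0)$, and pick $i$ with $W_j=\Phi_i$ at that point. Then $\phi$ also touches $\Phi_i$ from above. I claim that $d_i(y_0)\le 1/4$, i.e., $r(y_0)=d_i(y_0)$, unless the point is one where the inequality is trivial. Indeed, $\Phi_i\ge\Phi_{1-i}$ together with \eqref{distance-sum} gives
\[
 \kappa\bigl(d_i-d_{1-i}\bigr)\le w_{i,j}-w_{1-i,j}\le\mu
\]
by \eqref{compat-window}. Since $\kappa=4\mu$, this yields $d_i-d_{1-i}\le 1/4$, hence $d_i\le 3/8$. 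This is not quite $d_i\le 1/4$, so I will instead argue directly with the inequality $d_i\le d_{1-i}+1/4$. Write $s=d_i(y_0)$, so that $d_{1-i}=1/2-s$ and $s\le 3/8$. The function $d_i$ is differentiable away from $y=\gamma_i$ and $y=\gamma_{1-i}$. Since $s\le 3/8<1/2$, the point $y_0$ is not at $\gamma_{1-i}$, so the concave kink of $d_i$ is never active. At $y_0=\gamma_i$ the function $-\kappa d_i$ has a concave kink, which is harmless for touching from above: the $y$-derivative $q$ of any test function there lies in $[-\kappa,\kappa]$.

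\emph{Step 2 (the inequality).} By a standard separated-variables argument (use $w_{i,j}$ in $(X,t)$, and note that $d_i$ is smooth or concave in $y$), the derivatives satisfy $\phi_y=q$ with $|q|\le\kappa$, and $(\phi_t,D_X\phi)$ is a superdifferential element of $w_{i,j}$. Hence $(|q|-\kappa)_+=0$. Lemma \ref{barriers} gives $\phi_t-h_i(D_X\phi)\le0$, and \eqref{barrier-lip} gives $|D_X\phi|\le1$ (for a Lipschitz function, superdifferential elements have norm at most the Lipschitz constant), so $(|P|-1)_+=0$. It remains to show
\[
 -h(y_0,P)-4r(y_0)\le -h_i(P),\qquad P=D_X\phi.
\]
Now $|h(y_0,P)-h_i(P)|=|h(y_0,P)-h(\gamma_i,P)|\le d_\T(y_0,\gamma_i)=s$ by \eqref{Ly}, so the inequality follows once $4r(y_0)\ge s$. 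If $s\le 1/4$, then $r=s$ and $4s\ge s$. If $1/4<s\le 3/8$, then $r=1/2-s\ge 1/8$, and $4r\ge 1/2\ge s$. Either way the inequality holds, and the subsolution property follows.

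\emph{Step 3 (data and trace).} At $t=0$, \eqref{barrier-data} gives
\[
 W_j(X,y,0)=|X|-\kappa\min\{d_0,d_1\}=|X|-\kappa r(y),
\]
which is \eqref{W-data}. At $y=\gamma_i$ we have $d_i=0$ and $d_{1-i}=1/2$, so
\[
 W_j=\max\{w_{i,j},\,w_{1-i,j}-\kappa/2\}.
\]
Since $\kappa/2=2\mu>\mu$, \eqref{compat-window} shows that $w_{1-i,j}-\kappa/2<w_{i,j}$ for $t\le T_j$, which gives \eqref{W-trace}.

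The main obstacle is Step 1: ensuring that the piece which is active at the touching point never sits at its bad concave kink, and that $4r$ absorbs the mixing error. Both rely on the choice $\kappa=4\mu$ together with the compatibility window of Lemma \ref{compatibility}.
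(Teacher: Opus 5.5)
Your proposal is correct and follows essentially the same route as the paper. You test only at an active index $i$, freeze $y=y_0$ to invoke Lemma~\ref{barriers}, and use the Lipschitz bounds to kill both positive-part terms. You then absorb the mixing error $|h(y_0,P)-h_i(P)|\le d_i(y_0)$ by $4r(y_0)$ via $\kappa(d_i-d_{1-i})\le\mu$ with $\kappa=4\mu$; your split $s\le 1/4$ versus $1/4<s\le 3/8$ is the paper's split $d_i=r$ versus $r=d_{1-i}\ge 1/8$. The data and trace identities are checked exactly as in the paper.

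The kink discussion is unnecessary and slightly off. At $\gamma_{1-i}$ the function $-\kappa d_i$ has a convex kink, so no test function can touch $\Phi_i$ from above there. The real reason $\Phi_i$ alone fails to be a subsolution near $\gamma_{1-i}$ is the mixing error, which you handle correctly by restricting to active points.
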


\begin{proof}
Let $\phi\in C^1$ touch $W_j$ from above at
$(X_0,y_0,t_0)$, with $0<t_0<T_j$, and choose an active index $i$ in
\eqref{Wj}.  Keeping $y=y_0$ fixed, the function
\[
 (X,t)\longmapsto\phi(X,y_0,t)+\kappa d_i(y_0)
\]
touches $w_{i,j}$ from above.  Lemma~\ref{barriers} gives
\begin{equation}
 \phi_t(X_0,y_0,t_0)
 \leq h_i(D_X\phi(X_0,y_0,t_0)).                         \label{phi-t}
\end{equation}
The spatial Lipschitz bounds in \eqref{Wj} imply
\begin{equation}
 |D_X\phi|\leq 1,\qquad |\phi_y|\leq\kappa.             \label{test-gradients}
\end{equation}

We claim that, with $P=D_X\phi(X_0,y_0,t_0)$,
\begin{equation}
 h_i(P)\leq h(y_0,P)+4r(y_0).                          \label{key-envelope}
\end{equation}
If $d_i(y_0)=r(y_0)$, this follows from \eqref{Ly}
\[
 h_i(P)=h(\gamma_i,P)
 \leq h(y_0,P)+d_i(y_0)
 \leq h(y_0,P)+4r(y_0).
\]

Suppose instead that the other index $k=1-i$ is nearer to $y_0$.  
Since $i$ is active,
\[
 \kappa(d_i(y_0)-d_k(y_0))
 \leq w_{i,j}(X_0,t_0)-w_{k,j}(X_0,t_0)
 \leq\mu
\]
by Lemma~\ref{compatibility}.  
From \eqref{distance-sum} and $\kappa=4\mu$, we obtain
\[
 r(y_0)=d_k(y_0)\geq\frac18.
\]
Using \eqref{Ly} and $d_i\leq1/2$,
\[
 h_i(P)-h(y_0,P)
 \leq \frac{1}{2}
 \leq 4 r(y_0).
\]
This proves \eqref{key-envelope}.

Both positive-part terms in \eqref{H-def} vanish by
\eqref{test-gradients}.  
Combining \eqref{phi-t} and \eqref{key-envelope} yields
\[
 \phi_t+H((X_0,y_0),(D_X\phi,\phi_y))\leq0,
\]
which is the required viscosity subsolution inequality.

At $t=0$, both $w_{i,j}$ equal $|X|$, so \eqref{W-data} is immediate.
At $y=\gamma_i$, the competing branch satisfies
\[
 w_{1-i,j}(X,t)-\frac{\kappa}{2}
 \leq w_{i,j}(X,t)+\mu-2\mu
 <w_{i,j}(X,t).
\]
Therefore the $i$th branch is active and \eqref{W-trace} follows.
\end{proof}

Let $U$ denote the unique viscosity solution, in the class of functions with at most linear growth in $X$, of
\begin{equation}\label{Ueq}
\begin{cases}
 U_t+H\big((X,y),(D_XU,U_y)\big)=0 \qquad &\text{in } \R^2\times\T\times (0,\infty),\\
 U(X,y,0)=|X|  \qquad &\text{on } \R^2\times\T.    \end{cases}
\end{equation}

\begin{proposition}\label{unscaled-growth}
There are constants $c_0,t_0>0$ such that
\begin{equation}
 U(0,\gamma_i,t)\geq c_0t^{2/3}
 \qquad(t\geq t_0,\ i=0,1).                             \label{growth}
\end{equation}
\end{proposition}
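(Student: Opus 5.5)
Comparison is the whole argument: $U$ sits above each subsolution $W_j$ on its window $[0,T_j]$, and at the center $W_j$ grows like $\rho_j t$. At $y=\gamma_i$ and $X=0$ this gives
\[
 U(0,\gamma_i,t)\geq W_j(0,\gamma_i,t)=w_{i,j}(0,t)=\rho_j t\qquad (0\le t\le T_j),
\]
using \eqref{W-trace} and \eqref{barrier-center}. Optimizing over $j$ for a given $t$ then produces the $t^{2/3}$ law.

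\textbf{Step 1: comparison.} I will compare $W_j$ with $U$ on $\R^2\times\T\times[0,T_j]$.
\begin{itemize}
\item By Lemma~\ref{envelope}, $W_j$ is a viscosity subsolution of \eqref{three-d-pde} on $(0,T_j)$.
\item By \eqref{W-data}, its initial data satisfy $W_j(\cdot,\cdot,0)\le |X|=U(\cdot,\cdot,0)$.
\item $W_j$ is Lipschitz with at most linear growth in $X$; this follows from \eqref{barrier-lip} and the fact that $w_{i,j}$ is bounded by $|X|+\rho_j t$.
\item $H$ is Lipschitz in momentum and, by \eqref{Ly}, Lipschitz in $y$; it has no dependence on $(x_1,x_2)$.
\end{itemize}
The standard comparison principle for Hamilton--Jacobi equations with Lipschitz Hamiltonians, in the class of functions with linear growth on $\R^2\times\T$, therefore applies and yields $W_j\le U$ on $\R^2\times\T\times[0,T_j]$.

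The one technical point, and the only real obstacle, is that $W_j$ is only known to be a subsolution on the open interval $(0,T_j)$. Comparison on $[0,T]$ for any $T<T_j$ is standard. Continuity of both functions in $t$ then gives the inequality at $t=T_j$ as well. One should also check that linear growth is the right comparison class, so that uniqueness of $U$ holds there. Since $H$ is globally Lipschitz in the gradient, finite speed of propagation and a doubling-variables argument with a $\log(1+|X|^2)$ penalization handle this.

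\textbf{Step 2: choosing $j$.} Recall $\rho_j=p_j/100$ and $T_j=\mu/(2\pi^2p_j^3)$ from \eqref{Tj}, with $T_{j+1}=8T_j$. Set $t_0=T_1$. For $t\ge t_0$, choose $j\ge1$ with $T_{j-1}<t\le T_j$, using the convention $T_0:=T_1/8$. Then $t\le T_j\le 8t$. Since $p_j=(\mu/(2\pi^2T_j))^{1/3}$, we get $p_j\ge (\mu/(16\pi^2 t))^{1/3}$. Combining with Step 1,
\[
 U(0,\gamma_i,t)\ge\rho_j t=\frac{p_j t}{100}\ge \frac{1}{100}\Big(\frac{\mu}{16\pi^2}\Big)^{1/3}t^{2/3},
\]
which is \eqref{growth} with $c_0=\frac{1}{100}\big(\frac{\mu}{16\pi^2}\big)^{1/3}$. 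This holds for both $i=0,1$, since \eqref{W-trace} covers each $\gamma_i$.
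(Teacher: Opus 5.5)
Your proposal is correct and follows the paper's proof. Both compare $U$ with the subsolutions $W_j$ of Lemma~\ref{envelope} on $[0,T_j]$, read off $U(0,\gamma_i,t)\ge\rho_j t$ from \eqref{W-trace} and \eqref{barrier-center}, and then pick $j$ with $T_{j-1}<t\le T_j$ to get $p_j\ge(\mu/(16\pi^2 t))^{1/3}$, which gives the same constant $c_0=\frac{1}{200}(\mu/(2\pi^2))^{1/3}$. Your extra remarks on the linear-growth comparison class and the endpoint $t=T_j$ only make explicit what the paper treats as standard.
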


\begin{proof}
By Lemma~\ref{envelope} and the comparison principle,
\[
 U(X,y,t)\geq W_j(X,y,t)
 \qquad(0\leq t\leq T_j).
\]
Using \eqref{W-trace} and \eqref{barrier-center},
\begin{equation}
 U(0,\gamma_i,t)\geq\rho_jt
 \qquad(0\leq t\leq T_j).                               \label{scale-growth}
\end{equation}
We need to estimate $\rho_j=p_j/100$.
For $t>T_1$, choose the least $j\geq2$ such that $t\leq T_j$. Then, combined with Lemma \ref{compatibility},
\[
 \frac{T_j}{8}=T_{j-1}<t\leq T_j.
\]
By \eqref{Tj},
\begin{equation}
T_{j-1}=\frac{\mu}{2 \pi^2 p_{j-1}^3}=\frac{\mu}{2 \pi^2\left(2 p_j\right)^3}=\frac{\mu}{16 \pi^2 p_j^3} .
\end{equation}
Thus,
\[
 p_j>\left(\frac{\mu}{16 \pi^2 t}\right)^{1 / 3}.
\]
Consequently, \eqref{scale-growth} gives
\[
U(0,\gamma_i,t)
\geq \frac{p_jt}{100}
>
\frac{1}{100}
\left(\frac{\mu}{16\pi^2}\right)^{1/3}t^{2/3}
=
\frac{1}{200}
\left(\frac{\mu}{2\pi^2}\right)^{1/3}t^{2/3}.
\]
\end{proof}

\subsection{The cell problems and $\Hbar$}
 Fix $P\in\R^2$ with $|P|\leq 1$ and define
\[
 F_P(y)=h(y,P)+4r(y).
\]
By Lemma~\ref{packing}, at least one of $h_0(P),h_1(P)$ is zero.  
Hence,
\[
 Z_P=\{\gamma_i:h_i(P)=0\}
 = 
 \begin{cases}\left\{\gamma_0\right\}, \qquad& h_0(P)=0, h_1(P)>0, \\ \left\{\gamma_1\right\}, \qquad& h_1(P)=0, h_0(P)>0, \\ 
 \left\{\gamma_0, \gamma_1\right\}, \qquad& h_0(P)=h_1(P)=0 .\end{cases}
\]
is nonempty.  
Moreover, we have that $F_P\geq0$ and
\begin{equation}
F_P(y)=0\quad\Longleftrightarrow\quad y\in Z_P.          \label{Fzero}
\end{equation}
Fix $z\in Z_P$ and recall $\kappa=4\mu$. 
Choose \(z\in Z_P\), and fix a lift of \(z\) to \(\R\), still denoted by
\(z\). 
We regard \(F_P\) as a \(1\)-periodic function on \(\R\) and set
\[
g_P(y):=\kappa+F_P(y).
\]
Since \(g_P\geq\kappa>0\), the function
\[
s\longmapsto \int_z^s g_P(y)\,dy
\]
is continuous and strictly increasing on \([z,z+1]\). Hence there exists
a unique \(z^*\in(z,z+1)\) such that
\begin{equation}
    \int_z^{z^*} g_P(y)\,dy
    =
    \int_{z^*}^{z+1} g_P(y)\,dy
    =
    \frac12\int_z^{z+1}g_P(y)\,dy.
    \label{balanced-point}
\end{equation}

Define \(\chi_P\) on \([z,z+1]\) by
\[
\chi_P(y):=
\begin{cases}
\displaystyle
\int_z^y g_P(s)\,ds,
\qquad& z\leq y\leq z^*,\\[3mm]
\displaystyle
\int_z^{z^*}g_P(s)\,ds
-
\int_{z^*}^y g_P(s)\,ds,
\qquad& z^*\leq y\leq z+1.
\end{cases}
\]
By \eqref{balanced-point},
\[
\chi_P(z+1)=\chi_P(z)=0.
\]
Therefore, \(\chi_P\) extends to a \(1\)-periodic Lipschitz function on
\(\R\), which we continue to denote by \(\chi_P\). Its derivative
satisfies, almost everywhere,
\[
\chi_P'(y)=
\begin{cases}
\kappa+F_P(y),
\qquad& y\in(z,z^*),\\[1mm]
-\bigl(\kappa+F_P(y)\bigr),
\qquad& y\in(z^*,z+1).
\end{cases}
\]
\begin{lemma}\label{corrector-lemma}
The periodic Lipschitz function $\chi_P$ is a viscosity solution of
\begin{equation}
 (|\chi_P'|-\kappa)_+-F_P(y)=0
 \qquad\hbox{on }\T.                                     \label{eikonal}
\end{equation}
\end{lemma}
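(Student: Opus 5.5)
Since $\chi_P$ is Lipschitz with $|\chi_P'|=\kappa+F_P$ almost everywhere, the equation \eqref{eikonal} holds at every point of differentiability: there $(|\chi_P'|-\kappa)_+=F_P$ because $F_P\ge 0$. Since $g_P$ is continuous, $\chi_P$ is in fact $C^1$ on the open arcs $(z,z^*)$ and $(z^*,z+1)$, with derivative $\pm g_P$ there. So on these arcs $\chi_P$ is a classical solution, hence a viscosity solution. It remains to verify the viscosity inequalities at the two kink points $y=z^*$ and $y=z$ (equivalently $z+1$). I will do this by the usual one-sided derivative analysis, using the continuity of $F_P$.

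At $y=z^*$, $\chi_P$ has left derivative $g_P(z^*)$ and right derivative $-g_P(z^*)$, so it has a concave corner (a local max). No $C^1$ function touches from below, so the supersolution test is vacuous. For the subsolution test, a $C^1$ function $\phi$ touching from above has $\phi'(z^*)\in[-g_P(z^*),g_P(z^*)]$. Then
\[
(|\phi'(z^*)|-\kappa)_+\le (g_P(z^*)-\kappa)_+=F_P(z^*),
\]
which is the required subsolution inequality.

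At $y=z$, viewed periodically, the left derivative (from $(z^*-1,z)$) is $-g_P(z)$ and the right derivative is $g_P(z)$. This is a convex corner, so only the supersolution test is nontrivial. A $C^1$ function touching from below has $\phi'(z)\in[-g_P(z),g_P(z)]$, which only gives $(|\phi'|-\kappa)_+\le F_P(z)$, the wrong direction in general. This is where the choice $z\in Z_P$ enters. By \eqref{Fzero}, $F_P(z)=0$, so the supersolution inequality $(|\phi'(z)|-\kappa)_+-F_P(z)\ge 0$ reduces to $(|\phi'(z)|-\kappa)_+\ge 0$, which always holds.

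The main (and only delicate) point is therefore the convex kink at $z$, which is handled precisely by placing it at a zero of $F_P$; the concave kink at $z^*$ is automatic for a convex-in-gradient equation. I would also remark that $F_P$ is continuous: $h$ and $r$ are, and $h$ is continuous in $y$ by \eqref{Ly}. This justifies the pointwise identification of the one-sided derivatives as $\pm g_P$ at the kinks.
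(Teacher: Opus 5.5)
Your proof is correct and follows the same route as the paper. In both, the only nontrivial check is the supersolution inequality at the convex kink $z$, where $D^-\chi_P(z)=[-\kappa,\kappa]$ and $F_P(z)=0$ make it immediate. The paper disposes of the open arcs and the concave kink at $z^*$ by citing the general fact that, for Hamiltonians convex in the gradient, Lipschitz a.e.\ subsolutions are viscosity subsolutions; you verify those points directly via one-sided derivatives, which is equally valid.
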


\begin{proof}
We note that the Hamiltonian $K(y,q)=(|q|-\kappa)_+ - F_P(y)$ in \eqref{eikonal} is convex in $q$.
Thanks to \cite[Chapter 2]{Tran}, we only need to check the supersolution property at $y=z$.
For $l\in D^-\chi_P(z)=[-\kappa, \kappa]$, it is clear that $|l|\le \kappa$.
Therefore,
\[
K(z,l)=(|l|-\kappa)_+ - F_P(z)= 0-0=0.
\]
Thus, $\chi_P$ is a viscosity solution of \eqref{eikonal}.
\end{proof}

\begin{proposition}\label{effective-flat}
For every $P\in\R^2$ with $|P|\leq 1$,
\begin{equation}
 \cH(P,0)=0.                                             \label{flat}
\end{equation}
\end{proposition}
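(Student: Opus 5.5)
The plan is to exhibit an explicit periodic viscosity solution of the cell problem at momentum $(P,0)$ with right-hand side $0$. By uniqueness of the effective Hamiltonian, this forces $\cH(P,0)=0$.

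The cell problem for $H$ at $(P,0)$ asks for a $\Z^3$-periodic $v(x_1,x_2,y)$ with
\[
 H\big((x,y),(P+D_Xv,\,v_y)\big)=\cH(P,0).
\]
We look for a corrector depending only on $y$, namely $v(x_1,x_2,y)=\chi_P(y)$ with $\chi_P$ from Lemma~\ref{corrector-lemma}. Then $D_Xv=0$, so the $X$-momentum equals $P$. Since $|P|\le 1$, the term $(|P|-1)_+$ vanishes, and the equation reduces to
\[
 (|\chi_P'(y)|-\kappa)_+-h(y,P)-4r(y)=(|\chi_P'|-\kappa)_+-F_P(y)=0,
\]
which is exactly \eqref{eikonal}.

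It remains to check that a viscosity solution of the one-dimensional equation \eqref{eikonal} lifts to a viscosity solution of the three-dimensional cell problem. Let $\phi\in C^1(\R^3)$ touch $v$ from above (or below) at $(x_0,y_0)$. Freezing $x=x_0$, the map $y\mapsto\phi(x_0,y)$ touches $\chi_P$ at $y_0$, so $\phi_y(x_0,y_0)\in D^{\pm}\chi_P(y_0)$. Moreover, $v$ is constant in $X$, so $X\mapsto\phi(X,y_0)-v(X,y_0)$ has an extremum at $x_0$, which gives $D_X\phi(x_0,y_0)=0$. Substituting these into $H$ yields exactly the sub- or supersolution inequality of \eqref{eikonal} at $y_0$ for the test slope $\phi_y$. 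Hence $v$ is a periodic Lipschitz viscosity solution of the cell problem with constant $0$. The standard fact that the constant for which a periodic (continuous) viscosity solution exists is unique, and equals $\cH$ (see \cite{LPV,Tran}), then gives \eqref{flat}.

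The main point needing care is the lifting step: the test-function reduction must legitimately give $D_X\phi=0$ at the contact point. This holds because $v$ is independent of $X$, so $\phi(\cdot,y_0)$ itself attains a local extremum at $x_0$. Everything else is already contained in Lemma~\ref{corrector-lemma}, namely the construction of $\chi_P$ via the balanced point $z^*$ and the verification at the kink $y=z$ using $F_P(z)=0$.
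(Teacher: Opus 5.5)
Your proposal is correct and follows the paper's proof. Both take the $X$-independent corrector $v(X,y)=\chi_P(y)$, use $|P|\le 1$ to kill $(|P|-1)_+$, and so reduce the cell problem at $(P,0)$ to \eqref{eikonal}, which Lemma~\ref{corrector-lemma} solves. Your explicit test-function lifting (giving $D_X\phi=0$ at contact points) and your appeal to uniqueness of the ergodic constant simply spell out steps the paper leaves implicit.
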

\begin{proof}
Write \(\xi=(X,y)\in\T^2\times\T=\T^3\), and define
\[
v(X,y):=\chi_P(y).
\]
Since \(\chi_P\) is periodic and Lipschitz on \(\T\), the function \(v\)
is periodic and Lipschitz on \(\T^3\). Moreover, \(v\) is independent of
\(X\), so
\[
D_Xv=0,
\qquad
D_yv=D_y\chi_P.
\]
Consequently,
\[
(P,0)+Dv(\xi)
=
\bigl(P,D_y\chi_P(y)\bigr).
\]

We now evaluate the Hamiltonian defined in \eqref{H-def}. Since
\(|P|\leq1\),
\[
(|P|-1)_+=0.
\]
Recalling that
\[
F_P(y)=h(y,P)+4r(y),
\]
we obtain
\begin{align*}
H\bigl(\xi,(P,0)+Dv(\xi)\bigr)
&=
H\bigl(\xi,(P,D_y\chi_P(y))\bigr)\\
&=
\bigl(|D_y\chi_P(y)|-\kappa\bigr)_+
-h(y,P)-4r(y)\\
&=
\bigl(|D_y\chi_P(y)|-\kappa\bigr)_+
-F_P(y).
\end{align*}
By Lemma~\ref{corrector-lemma}, the last expression is zero in the
viscosity sense on \(\T\).
\end{proof}

\subsection{Proof of Theorem \ref{main-thm}}

The rescaling
\begin{equation}\label{scaling}
 u_{\rm cusp}^\eps(X,y,t)
 =
 \eps U\left(\frac{X}{\eps},\frac{y}{\eps},\frac{t}{\eps}\right)                    
\end{equation}
solves \eqref{eq:micro} with the initial datum $|X|$.  Proposition~\ref{unscaled-growth} yields, for all
sufficiently small $\eps$,
\begin{equation}
 u_{\rm cusp}^\eps(0,0,1)
 =
 \eps U(0,\gamma_0,\eps^{-1})
 \geq c_0\eps^{1/3}.                                    \label{cusp-lower}
\end{equation}
We now replace the cusp by one fixed bounded function without changing the value at $(0,0,1)$.

\begin{lemma}\label{finite-prop}
Let $G(z,p)$ be continuous, uniformly continuous in $z$ uniformly for bounded $p$, and globally $C_G$-Lipschitz in $p$.  
If $v_1,v_2$ are bounded viscosity solutions of
\[
\begin{cases}
    (v_k)_t+G(z,Dv_k)=0,\\
    v_k(\cdot,0)=g_k,
\end{cases}
\]
then
\begin{equation}
 v_1(z,t)-v_2(z,t)
 \leq
 \sup_{|z'-z|\leq C_Gt}\big(g_1(z')-g_2(z')\big).
 \label{finite-prop-est}
\end{equation}
In particular, if the initial data agree on $B_R(z)$ and $C_Gt<R$, then
the two solutions agree at $(z,t)$.
\end{lemma}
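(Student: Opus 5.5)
The plan is the standard finite speed of propagation argument for first-order equations with Hamiltonians that are Lipschitz in the gradient. We compare $v_1$ with $v_2+\psi$, where $\psi$ is a barrier that is nonnegative, dominates the initial gap on a cone, and propagates at speed $C_G$.

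Fix $(z_0,t_0)$ and set $M=\sup_{|z'-z_0|\le C_Gt_0}(g_1-g_2)(z')$. We may assume $M<\infty$; boundedness of the data is inherited from the solutions. Let $L$ be a bound for $\|g_1-g_2\|_\infty+\|v_1\|_\infty+\|v_2\|_\infty$. For $\delta>0$, choose a smooth nondecreasing $\beta:\R\to[0,\infty)$ with $\beta=0$ on $(-\infty,0]$ and $\beta\ge 2L$ on $[\delta,\infty)$. Define
\[
\psi(z,t)=M+\beta\bigl(\langle z-z_0\rangle_\delta - C_G(t_0-t)\bigr),\qquad \langle x\rangle_\delta=\sqrt{|x|^2+\delta^2}-\delta .
\]
Since $|D\langle\cdot\rangle_\delta|\le1$, we get $\psi_t=C_G\beta'\ge C_G|D\psi|$.

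The key step is to show that $w=v_2+\psi$ is a viscosity supersolution on $\R^n\times(0,t_0]$. If a smooth $\phi$ touches $w$ from below, then $\phi-\psi$ touches $v_2$ from below, so
\[
\phi_t-\psi_t+G(z,D\phi-D\psi)\ge0 .
\]
The Lipschitz bound in $p$ then gives
\[
\phi_t+G(z,D\phi)\ge \psi_t-C_G|D\psi|\ge0 .
\]
At $t=0$, on the region where $\langle z-z_0\rangle_\delta\le C_Gt_0$ (a subset of the ball $|z-z_0|\le C_Gt_0$), we have $w(\cdot,0)\ge g_2+M\ge g_1$. Off a slightly larger set we have $\psi\ge M+2L$, so $w\ge v_1$ there for all $t\in[0,t_0]$. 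In between, we take the cutoff $\beta$ to be steep within width $\delta$ and absorb the resulting error into a term $o(1)$ as $\delta\to0$. To handle this cleanly, I would instead use $M_\delta$, the supremum over the ball of radius $C_Gt_0+2\delta$, and then let $\delta\to0$ using continuity of $g_1-g_2$. This yields $w(\cdot,0)\ge g_1$ everywhere.

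The comparison principle for bounded uniformly continuous viscosity sub- and supersolutions of $v_t+G(z,Dv)=0$ then applies, since $G$ is uniformly continuous in $z$ for bounded $p$ and Lipschitz in $p$. It gives $v_1\le v_2+\psi$. Evaluating at $(z_0,t_0)$, where $\psi=M_\delta$, and sending $\delta\to0$ yields \eqref{finite-prop-est}.

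The \emph{in particular} statement follows by applying \eqref{finite-prop-est} in both directions: the supremum is over $\overline B_{C_Gt}(z)\subset B_R(z)$, where $g_1=g_2$.

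The main obstacle is verifying the hypotheses of the comparison principle. Depending on the precise framework, this may require uniform continuity of the solutions or a doubling-of-variables argument in which the Lipschitz dependence on $p$ and the modulus in $z$ at bounded gradients interact. The natural first attempt is to cite the standard comparison result in \cite{Tran} for bounded uniformly continuous solutions. If needed, one can instead use Lipschitz data and the equation itself to get the required Lipschitz or continuity bounds on $v_k$.
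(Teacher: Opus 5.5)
Your argument is correct: it is the standard barrier-plus-comparison proof of finite speed of propagation, which is what the paper means when it skips the proof and refers to \cite[Theorem 1.39]{Tran}. Two small points. The set $\{\langle z-z_0\rangle_\delta\le C_Gt_0\}$ contains, rather than is contained in, $\overline B_{C_Gt_0}(z_0)$, which is exactly why your switch to the enlarged-ball supremum $M_\delta$ is needed. Also, the comparison step requires the solutions to lie in a uniqueness class: for BUC functions one needs a modulus of the form $\omega(|z-z'|(1+|p|))$, or else Lipschitz solutions as in the paper's application, and the paper's statement leaves this hypothesis equally implicit.
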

The proof of this lemma is standard and hence is skipped.
See \cite[Theorem 1.39]{Tran} for example.


\begin{proof}[Proof of Theorem \ref{main-thm}]
Both the Hamiltonian $H$ and the rescaled Hamiltonian $H(z/\eps,p)$ are Lipschitz in $(P,q)$ with Lipschitz constant at most $\sqrt{5}$.  
Choose $g\in\BUC(\R^3)\cap\Lip(\R^3)$ such that
\begin{equation}
 g(X,y)=\min\{|X|,4\}.                                \label{bounded-g}
\end{equation}

For $S>4$, let
\[
 g^S(X,y)=\min\{|X|,S\},
\]
and denote by $u^{\eps,S}$ the microscopic solution with initial datum $g^S$.
The functions $g^S$ and $g$ agree on $B_{4}(0)$. 
The finite-propagation lemma (Lemma~\ref{finite-prop}) therefore gives
\begin{equation}
 u^{\eps,S}(0,0,1)=u^\eps(0,0,1).                        \label{local-equality}
\end{equation}
Let $S\to \infty$, by the stability of viscosity solutions and the wellposedness of \eqref{eq:micro}  in the class of functions with at most linear growth in $X$,
\begin{equation}
 u^\eps(0,0,1)=\lim_{S\to \infty} u^{\eps,S}(0,0,1)=u_{\rm cusp}^\eps(0,0,1).                 \label{cusp-equality}
\end{equation}

It remains to identify the effective solution for the bounded datum \eqref{bounded-g}.  
The stationary function
\[
 u(X,y,t)=g(X,y)=\min\{|X|,4\}
\]
is the viscosity solution of \eqref{eq:eff}.  
Indeed, every test gradient has the form $(P,0)$ with $|P|\leq 1$, and Proposition~\ref{effective-flat} gives $\cH(P,0)=0$.  
In particular,
\begin{equation}
 u(0,0,1)=0.                                             \label{u-zero}
\end{equation}
Combining \eqref{cusp-lower}, \eqref{cusp-equality}, and \eqref{u-zero}
proves
\[
 \|u^\eps-u\|_{L^\infty(\R^3\times[0,1])}
 \geq u^\eps(0,0,1)-u(0,0,1)
 \geq c_0\eps^{1/3}.
\]
This completes the proof.
\end{proof}

\section{The two-dimensional case}\label{sec:2D}

We next show that the power $1/3$ is also optimal in two dimensions,
up to a logarithmic factor.

\begin{proof}[Proof of Theorem \ref{thm:2D-optimal}]
We only modify the momentum construction in the proof of
Theorem~\ref{main-thm}.  Put
\[
 \delta_j=\frac{\delta_*}{2^{j}},
 \qquad
 \ell_j=\frac{\ell_*}{(j+1)^2},
 \qquad j\geq1.
\]
Choose $\ell_*>0$ small and then $\delta_*>0$ so small that
$\delta_j\leq\ell_j/100$ for all $j$. In this proof, we take
\[
a_j := s+ \ell_j+ 2\sum_{k>j}\ell_k ,\qquad L:=2\sum_{k\geq1}\ell_k\in\left(0,\frac12\right),\qquad\text{and}\qquad s:=\frac14-\frac12L>0
\]
so that the following 
\[
 J_j=(a_j-\ell_j,a_j+\ell_j)\subset\left(0,\frac12\right)
\]
are pairwise disjoint intervals. Set
\[
 K_{0,j}=J_j\cap\{a_j-\ell_j+(8m+2)\delta_j:m\in\Z\},
 \qquad
 K_{1,j}=J_j\cap\{a_j-\ell_j+(8m+6)\delta_j:m\in\Z\}.
\]
Thus each $K_{i,j}$ is an $8\delta_j$-net of $J_j$, whereas two
points in $K_{0,j}\cup K_{1,j}$ are at distance at least
$4\delta_j$.  For $p\in\overline J_j$, let
\[
 R_j(p)=\delta_j\left(1-\frac{(p-a_j)^2}{\ell_j^2}\right),
\]
and define
\begin{equation}\label{eq:2D-hi}
 h_i(p)=\sum_{j\geq1}\sum_{c\in K_{i,j}}
       (R_j(c)-|p-c|)_+,
 \qquad i=0,1.
\end{equation}
See Figure \ref{fig:tent}. Since
\[
 R_j(c)\leq
 \frac{2\delta_j}{\ell_j}\dist(c,\partial J_j),
\]
every tent in \eqref{eq:2D-hi} is supported in $J_j$.  All the tent
supports, including those of different colors, are pairwise disjoint.
Since $\delta_j\to0$, the functions are also continuous at every
accumulation point of the intervals.  Consequently, $h_0,h_1$ are
nonnegative and one-Lipschitz, and
\begin{equation}\label{eq:2D-h-properties}
 \min\{h_0(p),h_1(p)\}=0,
 \qquad h_i(c)=R_j(c)\quad(c\in K_{i,j}),
 \qquad h_i(\pm1)=0.
\end{equation}
In particular, $h_0+h_1\leq\delta_1$.  Thus, by taking $\delta_*$
smaller, the interpolation in the three-dimensional construction is
one-Lipschitz in $y$, uniformly in $p$.

\begin{figure}[htbp]
    \centering
    \includegraphics[width=0.55\textwidth]{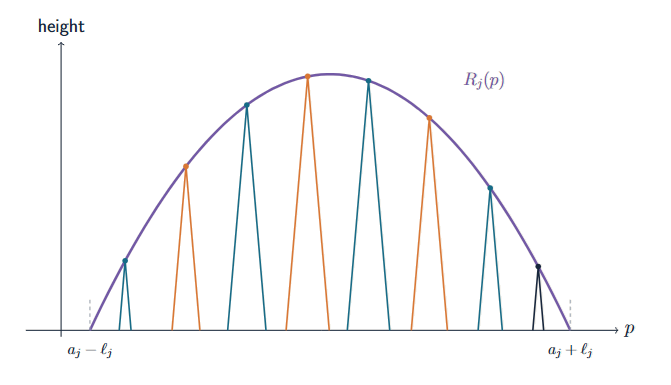}
    \caption{The union of the blue tents is the graph of $h_0$, while that of the red tents is the graph of $h_1$ (on a fixed $J_j$).}
    \label{fig:tent}
\end{figure}

For $i=0,1$, set
\begin{equation}\label{eq:2D-wij}
 w_{i,j}(x,t)=
 \max\left\{|x|,\ \max_{c\in K_{i,j}}
                 \{R_j(c)t+cx\}\right\}.
\end{equation}
Each affine function in this maximum solves
$w_t-h_i(w_x)=0$, and $|x|$ is a subsolution of the same equation.
It follows that $w_{i,j}$ is a viscosity subsolution.  Moreover,
\begin{equation}\label{eq:2D-barrier-basic}
 w_{i,j}(x,0)=|x|,\qquad
 \Lip_x w_{i,j}\leq1,\qquad
 w_{i,j}(0,t)\geq\frac12\delta_jt.
\end{equation}

We claim that
\begin{equation}\label{eq:2D-compatibility}
 |w_{0,j}(x,t)-w_{1,j}(x,t)|
 \leq64\frac{\delta_j^3}{\ell_j^2}t.
\end{equation}
Indeed, let
\[
 \Phi_j(x,t)=
 \max\left\{|x|,\ \sup_{p\in\overline J_j}
                 \{R_j(p)t+px\}\right\}.
\]
If the second term is active, its maximizer $p_*$ is an interior
point of $J_j$, since $R_j=0$ on $\partial J_j$ and $|p|<1$ there.
For each $i$, choose $c_i\in K_{i,j}$ with
$|c_i-p_*|\leq8\delta_j$.  The quadratic form of $R_j$ gives
\[
 0\leq \Phi_j(x,t)-w_{i,j}(x,t)
 \leq \frac{\delta_jt}{\ell_j^2}|c_i-p_*|^2
 \leq64\frac{\delta_j^3}{\ell_j^2}t.
\]
If the first term is active, both $w_{i,j}$ equal $|x|$.  This proves
\eqref{eq:2D-compatibility}.

Fix $\mu>0$ and put
\[
 T_j=\frac{\mu\ell_j^2}{64\delta_j^3}=\frac{\mu \ell_*^2}{\delta_*^3} \,\frac{2^{3(j-2)}}{(j+1)^4}.
\]
Then $|w_{0,j}-w_{1,j}|\leq\mu$ on $[0,T_j]$, and
\[
 \frac{T_{j+1}}{T_j}
 =8\left(\frac{j+1}{j+2}\right)^4
 \geq\frac{128}{81}.
\]

We now repeat \eqref{distance-sum}--\eqref{Wj}, with $X,P\in\R^2$
replaced by $x,p\in\R$ and with the functions in
\eqref{eq:2D-hi}--\eqref{eq:2D-wij}.  The proof of
Lemma~\ref{envelope} is unchanged and gives, for the solution with
initial datum $|x|$,
\begin{equation}\label{eq:2D-unscaled-growth}
 U(0,\gamma_i,t)\geq\frac12\delta_jt
 \qquad(0\leq t\leq T_j,\ i=0,1).
\end{equation}
The proof of Proposition~\ref{effective-flat} is also unchanged;
indeed, its only input from the momentum construction is
$\min\{h_0,h_1\}=0$.  Hence
\begin{equation}\label{eq:2D-effective-flat}
 \Hbar(p,0)=0\qquad(|p|\leq1).
\end{equation}
The resulting Hamiltonian is periodic, coercive, and globally
Lipschitz.  It is nonconvex.  Indeed, if $c\in K_{i,j}$ and
$R=R_j(c)$, then at $y=\gamma_i$ and $q=0$ its values at
$c,c+R,c+2R$ are $-R,0,0$, respectively, which violates the convex
midpoint inequality.

For $t>T_1$, choose the least $j\geq2$ such that $t\leq T_j$.
Then $t>T_{j-1}$, and therefore
\[
 \delta_j^3\geq c\frac{\ell_j^2}{t}.
\]
Since $T_j\asymp 8^j/(j+1)^4$, the minimality of $j$ also gives
$j\leq C\log(e+t)$.  Using \eqref{eq:2D-unscaled-growth}, we obtain
\[
 U(0,\gamma_i,t)
 \geq c\,t^{2/3}\ell_j^{2/3}
 \geq c\,\frac{t^{2/3}}{\log^{4/3}(e+t)}.
\]
Finally, the scaling, finite-propagation, and bounded-data arguments
in the proof of Theorem~\ref{main-thm}, with
$g(x,y)=\min\{|x|,4\}$, give
\[
 u^\eps(0,0,1)-u(0,0,1)
 \geq c\,\frac{\eps^{1/3}}
 {\log^{4/3}(e+\eps^{-1})}.
\]
This proves the result.
\end{proof}

\section{The one-dimensional case}\label{sec:1D}

\subsection{Lipschitz selection of correctors}
We proceed to prove Theorem \ref{thm:1D-Lip}.

We first suppose that $H$ is smooth.  
Given $p\in\R$ and $\la,\nu>0$, let $v_p^{\la,\nu}$ denote the unique periodic solution of
\begin{equation}\label{eq:viscdisc}
 \la v_p^{\la,\nu}
 +H\bigl(y,p+(v_p^{\la,\nu})'\bigr)
 =\nu (v_p^{\la,\nu})''
 \qquad\text{in }\T.
\end{equation}
Set
\begin{equation}\label{eq:total-slope}
 W_p^{\la,\nu}:=p+(v_p^{\la,\nu})'.
\end{equation}
Periodicity gives
\begin{equation}\label{eq:mean-slope}
 \int_\T W_p^{\la,\nu}(y)\,d y=p.
\end{equation}
Differentiating \eqref{eq:viscdisc} and using
$(v_p^{\la,\nu})'=W_p^{\la,\nu}-p$ yields
\begin{equation}\label{eq:W-equation}
 -\nu (W_p^{\la,\nu})''
 +\frac{d}{d y}H\bigl(y,W_p^{\la,\nu}\bigr)
 +\la W_p^{\la,\nu}
 =\la p.
\end{equation}
The important point is that the explicit $y$-dependence remains inside an
exact spatial derivative.

\begin{lemma}\label{lem:nocross}
For every $\la,\nu>0$ and every $p<q$,
\begin{equation}\label{eq:visc-order}
 W_p^{\la,\nu}(y)\le W_q^{\la,\nu}(y)
 \qquad\text{for every }y\in\T.
\end{equation}
\end{lemma}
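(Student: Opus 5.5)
We argue by the maximum principle applied to the difference $D:=W_q^{\la,\nu}-W_p^{\la,\nu}$, exploiting that \eqref{eq:W-equation} is a uniformly parabolic-type (elliptic, viscous conservation law) equation in divergence form. Since $H$ is smooth and $\nu>0$, standard elliptic regularity gives $v_p^{\la,\nu}\in C^\infty(\T)$, so $W_p:=W_p^{\la,\nu}$ and $W_q:=W_q^{\la,\nu}$ are smooth periodic functions and \eqref{eq:W-equation} holds classically for both.

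Subtracting the two equations, we obtain
\[
 -\nu D''+\frac{d}{dy}\Bigl(H(y,W_q)-H(y,W_p)\Bigr)+\la D=\la(q-p)>0 .
\]
Write $H(y,W_q)-H(y,W_p)=b(y)D$ with $b(y):=\int_0^1 H_p\bigl(y,W_p+s(W_q-W_p)\bigr)\,ds$, which is smooth and bounded on $\T$. The equation then becomes the linear equation
\[
 -\nu D''+b D'+(b'+\la)D=\la(q-p).
\]
Suppose $\min_\T D<0$, attained at $y_0$. There $D'(y_0)=0$ and $D''(y_0)\ge0$, so $(b'(y_0)+\la)D(y_0)\ge \la(q-p)>0$. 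This is not yet a contradiction, because $b'$ has no sign; this zeroth-order coefficient is the main obstacle.

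To get around it, we do not test pointwise but instead use the divergence structure together with an $L^1$-type contraction argument (Kruzhkov style). Multiply the $D$-equation, written as $-\nu D''+(bD)'+\la D=\la(q-p)$, by $\mathrm{sgn}_\delta(D^-)$, a smooth approximation of $-\mathbf 1_{\{D<0\}}$, and integrate over $\T$:
\begin{itemize}
\item the viscous term gives a nonnegative contribution $\nu\int \mathrm{sgn}_\delta'(D)|D'|^2\ge 0$;
\item the transport term is $\int bD\,\mathrm{sgn}_\delta'(D)D'$, which tends to $0$ as $\delta\to0$, since $D\,\mathrm{sgn}_\delta'(D)$ is bounded and supported where $|D|<\delta$;
\item the remaining terms yield $\la\int D^-\le -\la(q-p)\,|\{D<0\}|\le0$ in the limit.
\end{itemize}
Hence $\int_\T D^-=0$, so $D\ge0$ everywhere by continuity, which is \eqref{eq:visc-order}.

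An alternative route, if the sign bookkeeping above is awkward, is comparison for the original equation \eqref{eq:viscdisc}. Here $u_p:=py+v_p^{\la,\nu}$ satisfies $\la u_p+H(y,u_p')=\nu u_p''+\la py$, and a similar non-crossing statement for derivatives follows from the fact that the equation for $W$ is a viscous balance law. In either approach, the essential points are the divergence form, which kills the bad zeroth-order term via the $L^1$ contraction, and the strict positivity of the source $\la(q-p)$.
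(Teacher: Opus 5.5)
Your argument is correct, and it takes a somewhat different route from the paper's.

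\emph{The paper's route.} The paper sets $U=W_p^{\la,\nu}-W_q^{\la,\nu}$ and uses \eqref{eq:mean-slope} to ensure $U$ is not positive on all of $\T$. It then picks one connected component $(a,b)$ of $\{U>0\}$ and integrates the difference equation over $(a,b)$. Three things happen there:
\begin{itemize}
\item the flux $H(y,W_p)-H(y,W_q)$ drops out, because $U(a)=U(b)=0$;
\item the viscous term contributes $-\nu\bigl(U'(b)-U'(a)\bigr)\ge0$, by the one-sided signs $U'(a)\ge0\ge U'(b)$;
\item the term $\la\int_a^bU$ is strictly positive.
\end{itemize}
This contradicts the negative right-hand side $\la(p-q)(b-a)$.

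\emph{Your route.} Your smoothed-sign test function is a regularized, global version of the same mechanism. Multiplying by $\psi_\delta(D)$ is a smooth stand-in for integrating over the whole set $\{D<0\}$. The divergence form neutralizes the sign-indefinite term $b'D$, which (as you correctly note) defeats the pointwise maximum principle. The strictly positive source $\la(q-p)$ then supplies the contradiction.

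\emph{What each buys.} Your version needs neither the choice of a component nor the mean-slope identity. It is the standard Kruzhkov-type $L^1$ argument for viscous balance laws, so it is not tied to the interval structure of open subsets of $\T$. The paper's version avoids any regularization and any limit $\delta\to0$; it uses only elementary facts at the endpoints of an interval. Note also that the linearization $b$ is not really needed: all you use is $|H(y,W_q)-H(y,W_p)|\le C|D|$.

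A few clean-ups:
\begin{itemize}
\item The notation $\mathrm{sgn}_\delta(D^-)$ would approximate $+\mathbf 1_{\{D<0\}}$, which flips the sign of the viscous term. What you actually use, and what makes your signs come out right, is a smooth nondecreasing $\psi_\delta$ with $\psi_\delta=-1$ on $(-\infty,-\delta]$ and $\psi_\delta=0$ on $[0,\infty)$.
\item The transport term vanishes as $\delta\to0$ by dominated convergence, because $D\psi_\delta'(D)$ is bounded and tends to $0$ pointwise. Saying it is ``supported where $|D|<\delta$'' is not enough on its own, since $\{D=0\}$ may have positive measure.
\item The closing ``alternative route'' via comparison for \eqref{eq:viscdisc} is not an argument: comparison orders the functions $u_p$, not their slopes. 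It should be dropped.
\end{itemize}
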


\begin{proof}
Write $W=W_p^{\la,\nu}$, $Z=W_q^{\la,\nu}$, and $U=W-Z$.
Subtracting the two equations \eqref{eq:W-equation} gives
\begin{equation}\label{eq:U-equation}
 -\nu U''
 +\frac{d}{dy}\bigl(H(y,W)-H(y,Z)\bigr)
 +\la U
 =\la(p-q)<0.
\end{equation}
Moreover, by \eqref{eq:mean-slope},
\[
 \int_\T U\,d y=p-q<0.
\]
Suppose that $U>0$ somewhere.  
Since $U$ cannot be positive on the whole $\T$, the positivity set has a connected component $(a,b)$, interpreted on a lift of $\T$, such that
\[
 U(a)=U(b)=0,
 \qquad
 U>0\quad\text{in }(a,b).
\]
Consequently,
\[
 U'(a)\ge0,
 \qquad
 U'(b)\le0.
\]
At both endpoints $W=Z$, and hence
\[
 H(a,W(a))-H(a,Z(a))=
 H(b,W(b))-H(b,Z(b))=0.
\]
Integrating \eqref{eq:U-equation} on $(a,b)$ therefore gives
\begin{equation}\label{eq:cross-contradiction}
 -\nu\bigl(U'(b)-U'(a)\bigr)
 +\la\int_a^bU(y)\,d y
 =\la(p-q)(b-a).
\end{equation}
The left-hand side is strictly positive, whereas the right-hand side is strictly negative.  
This contradiction proves \eqref{eq:visc-order}.
\end{proof}

For fixed $\la>0$, let $v_p^\la$ be the unique periodic viscosity solution of the first-order discounted equation
\begin{equation}\label{eq:disc}
 \la v_p^\la+H\bigl(y,p+(v_p^\la)'\bigr)=0
 \qquad\text{in }\T.
\end{equation}
Standard vanishing-viscosity stability and comparison imply
\begin{equation}\label{eq:visc-limit}
 v_p^{\la,\nu}\longrightarrow v_p^\la
 \qquad\text{uniformly on }\T
 \quad\text{as }\nu\to0.
\end{equation}
The inequality \eqref{eq:visc-order} can be written in the distributional form
\[
 p+(v_p^{\la,\nu})'
 \le q+(v_q^{\la,\nu})'.
\]
Uniform convergence in \eqref{eq:visc-limit} permits passage to the limit in distributions.  
Since discounted solutions are Lipschitz by coercivity, we obtain, for $p<q$,
\begin{equation}\label{eq:disc-order}
 p+(v_p^\la)'\le q+(v_q^\la)'
 \qquad\text{a.e. on }\T.
\end{equation}

Normalize the discounted solutions by
\begin{equation}\label{eq:disc-normalized}
 \widetilde v^\la(y,p):=v_p^\la(y)-v_p^\la(0).
\end{equation}

\begin{lemma}[Exact Lipschitz estimate at fixed discount]\label{lem:discLip}
For every $\la>0$ and all $p,q\in\R$,
\begin{equation}\label{eq:discLip}
 \norm{\widetilde v^\la(\cdot,p)-\widetilde v^\la(\cdot,q)}_{L^\infty(\T)}
 \le \abs{p-q}.
\end{equation}
\end{lemma}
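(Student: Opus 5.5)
Fix $\la>0$ and $p<q$, and put $D(y):=\widetilde v^\la(y,q)-\widetilde v^\la(y,p)$. By \eqref{eq:disc-normalized}, $D(0)=0$. The plan is to control $D$ from both sides using only the pointwise order \eqref{eq:disc-order} and the periodicity of the discounted solutions, and then to treat $p>q$ by symmetry.

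\textbf{Step 1: $D$ is Lipschitz.} Since $v_p^\la$ and $v_q^\la$ are Lipschitz by coercivity, $D$ is Lipschitz on $\T$. Its derivative satisfies, almost everywhere,
\[
 D'(y)=(v_q^\la)'(y)-(v_p^\la)'(y)
 =\bigl(q+(v_q^\la)'(y)\bigr)-\bigl(p+(v_p^\la)'(y)\bigr)-(q-p).
\]

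\textbf{Step 2: bounds on $D'$.} By \eqref{eq:disc-order} the difference of total slopes is nonnegative a.e., so
\[
 D'\geq -(q-p)\qquad\text{a.e.}
\]
Periodicity gives $\int_\T D'\,dy=0$.

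\textbf{Step 3: two-sided bound on $D$.} Work on a lift $[0,1]$, using $D(0)=D(1)=0$. For $y\in[0,1]$,
\[
 D(y)=\int_0^y D'\,ds\geq -(q-p)y,
 \qquad
 D(y)=-\int_y^1 D'\,ds\leq (q-p)(1-y).
\]
Hence $-(q-p)\leq D(y)\leq q-p$, which is \eqref{eq:discLip}. In fact this yields the sharper bound $|D|\le (q-p)\max\{y,1-y\}$, although only the weaker one is needed.

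\textbf{Step 4: the case $p>q$.} Swap the roles of $p$ and $q$.

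The only delicate point is the passage from the distributional inequality \eqref{eq:disc-order} to the a.e. inequality for the Lipschitz derivatives. This holds because a nonnegative distribution given by an $L^\infty$ function is nonnegative a.e. Since \eqref{eq:disc-order} has already been stated a.e., this step is immediate. The estimate is exact, with constant $1$, because only the one-sided bound $D'\geq-(q-p)$ combined with periodicity is used; no information about $H$ enters beyond \eqref{eq:disc-order}.
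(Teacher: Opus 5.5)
Your proposal is correct and is essentially the paper's own argument. Both proofs use the a.e. nonnegativity of the total-slope difference from \eqref{eq:disc-order}, together with periodicity, to trap $D(y)$ in $[-(q-p)y,\,(q-p)(1-y)]$. Your upper bound via $D(1)=0$ and $-\int_y^1 D'$ is just a rewriting of the paper's $\int_0^y F^\la\le\int_0^1 F^\la=q-p$.
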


\begin{proof}
Assume $p<q$ and let $d=q-p>0$.  
Define
\[
 F^\la(y)
 :=q+(v_q^\la)'(y)-p-(v_p^\la)'(y).
\]
By \eqref{eq:disc-order}, $F^\la\ge0$ a.e.  
Periodicity gives 
\begin{equation}\label{eq:Fmass}
 \int_0^1F^\la(y)\,d y=q-p=d.
\end{equation}
Hence, for $0\le y\le1$ fixed,
\[
 0\le\int_0^yF^\la(s)\,d s\le d.
\]
By the fundamental theorem of calculus and the above inequality,
\begin{align}
 \widetilde v^\la(y,q)-\widetilde v^\la(y,p)
 &=\int_0^y\bigl((v_q^\la)'-(v_p^\la)'\bigr)(s)\,d s\notag\\
 &=\int_0^yF^\la(s)\,d s-yd \in [-yd,d-yd].
 \label{eq:difference-formula}
\end{align}
Thus,
\[
-d\le -yd\le
 \widetilde v^\la(y,q)-\widetilde v^\la(y,p)
 \le(1-y)d \le d,
\]
which implies \eqref{eq:discLip}.
\end{proof}

\begin{remark}
The proof also gives the exact identity, for $p<q$,
\[
 \norm{\bigl(q+(v_q^\la)'\bigr)
       -\bigl(p+(v_p^\la)'\bigr)}_{L^1(\T)}=q-p.
\]
\end{remark}

\begin{proof}[Proof of Theorem \ref{thm:1D-Lip}]
The arguments are quite standard (see \cite[Chapter 4]{Tran} for example).

We first retain the smoothness assumption on $H$.  
For $p$ in a compact interval, the normalized discounted correctors are uniformly Lipschitz in $y$, independently of $\la$.  
Indeed, testing \eqref{eq:disc} at a maximum and a minimum of $v_p^\la$ gives
\begin{equation}\label{eq:lambda-v-bound}
 \min_{y\in\T}H(y,p)
 \le -\la v_p^\la(y)
 \le \max_{y\in\T}H(y,p)
 \qquad\text{for every }y\in\T.
\end{equation}
Then, uniform coercivity yields, for every $R>0$, a constant $C_R>0$ such that, uniformly in $\la\in(0,1]$, for $|p|\le R$,
\begin{equation}\label{eq:slope-compact}
 \abs{p+(v_p^\la)'(y)}\le C_R
 \qquad\text{a.e.}
\end{equation}

The estimate \eqref{eq:discLip} and \eqref{eq:slope-compact} make
$\{\widetilde v^\la\}$ equicontinuous on every compact subset of
$\T\times\R$.  By Arzel\`a--Ascoli and a diagonal argument, there is one
sequence $\la_j\to0$ and a continuous function $v(y,p)$ such that
\begin{equation}\label{eq:joint-limit}
 \widetilde v^{\la_j}\longrightarrow v
 \qquad\text{locally uniformly on }\T\times\R.
\end{equation}
Then, for every $p$,
\[
 H\bigl(y,p+v_y(y,p)\bigr)=\Hbar(p)
 \qquad\text{in }\T.
\]
The normalization $v(0,p)=0$ follows from
\eqref{eq:disc-normalized}.  
Passing to the limit in \eqref{eq:discLip} proves \eqref{eq:Lip-selection}.  
Passing to the limit in the distributional inequality \eqref{eq:disc-order} proves \eqref{eq:slope-order}.

For merely continuous $H$, choose smooth periodic Hamiltonians $H_k$ converging locally uniformly to $H$ and sharing a common coercivity modulus.  
Apply the preceding construction to $H_k$.  
On compact $p$-intervals, coercivity supplies uniform spatial Lipschitz bounds, while the parameter estimate has the same constant $1$ for every $k$.  
A second diagonal compactness argument, together with stability of periodic ergodic constants and viscosity solutions, passes the selection and both estimates to $H$.  

To prove the final claim, we note that \eqref{eq:Lip-selection} is precisely \cite[Condition (4.55)]{Tran}.
We apply \cite[the proof of Theorem 4.40]{Tran} to get the desired $O(\eps^{1/2})$ convergence rate.
\end{proof}

\subsection{Optimality of the convergence rate $O(\eps^{1/2})$}

Set $\sigma(y)=2\dist(y,\Z)$ and define
\begin{equation}\label{eq:1D-opt-H}
 H(y,q)=
 \begin{cases}
  -q, &q\leq0,\\
  \min\{2q,\max\{2q-1,\sigma(y)\}\}, &0\leq q\leq1,\\
  q, &q\geq1.
 \end{cases}
\end{equation}
Let $g(x)=\min\{(x)_+,4\}$.

\begin{figure}[htbp]
    \centering
    \includegraphics[width=0.75\textwidth]{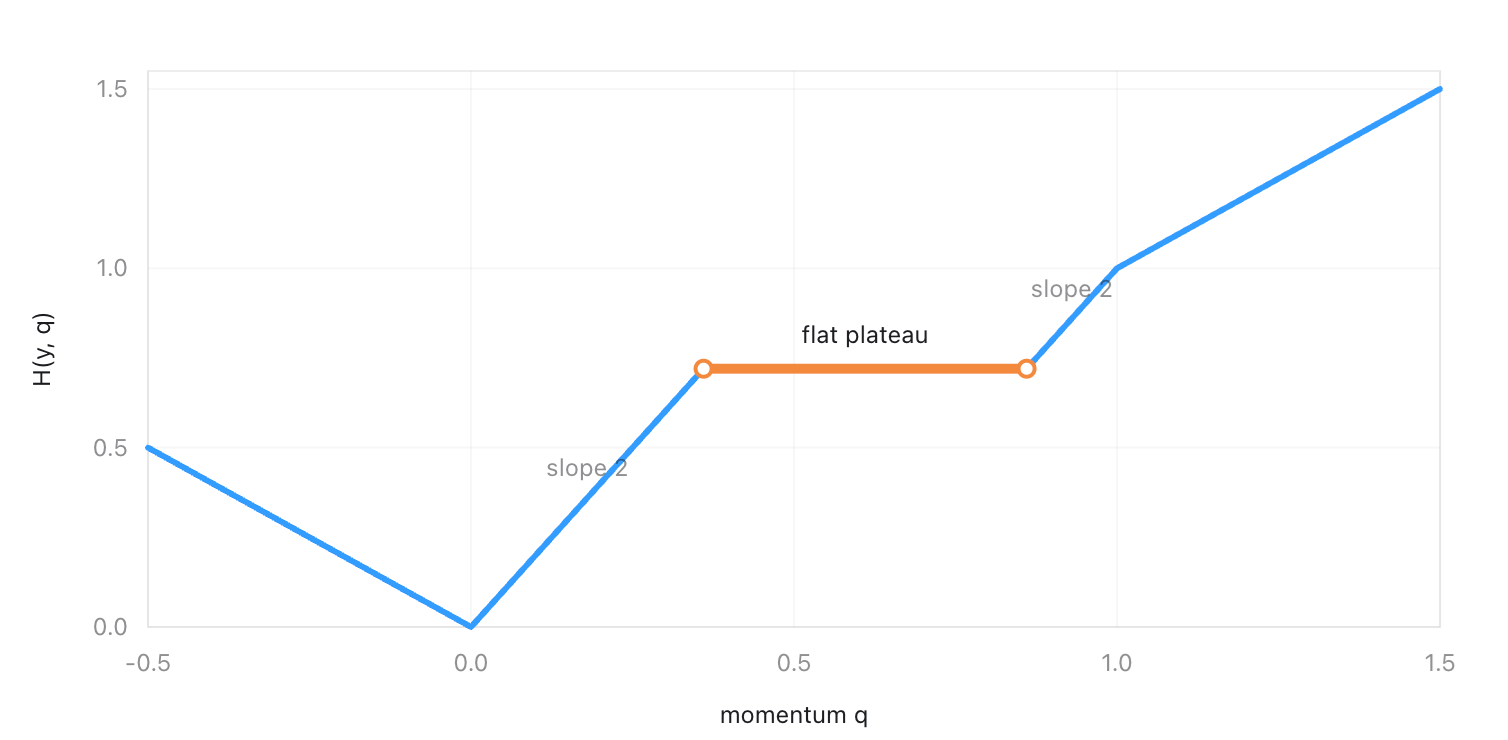}
    \caption{The graph of $H$ (for $y=0.36$).}
    \label{fig:hamiltonian_profile}
\end{figure}

\begin{theorem}\label{thm:1D-optimal}
The Hamiltonian $H\in C^{0,1}(\R\times\R)$ defined in
\eqref{eq:1D-opt-H} is $\Z$-periodic in its first variable and coercive and nonconvex in its second.  
Moreover,
$g\in\BUC(\R)\cap\Lip(\R)$.  
Let $u^\eps$ and $u$ be the corresponding microscopic and effective solutions.  
Then
\begin{equation}\label{eq:1D-opt-limit}
 \liminf_{\eps\to0}
 \frac{|u^\eps(1,1)-u(1,1)|}{\eps^{1/2}}
 \geq\frac{1}{4\sqrt{2\pi}}.
\end{equation}
\end{theorem}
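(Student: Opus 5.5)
The plan is to compute both sides at the point $(1,1)$ as explicitly as possible, using the one-dimensional structure.

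\textbf{Step 0: routine properties.} For fixed $y$, the profile $q\mapsto H(y,q)$ behaves as follows.
\begin{itemize}
\item It equals $-q$ for $q\le0$.
\item It rises as $2q$ until it reaches the level $\sigma(y)$ at $q=\sigma(y)/2$.
\item It stays flat at height $\sigma(y)$ until $q=(1+\sigma(y))/2$.
\item It rises as $2q-1$ up to the value $1$ at $q=1$, and equals $q$ afterwards.
\end{itemize}
This gives continuity, global Lipschitz continuity, $\Z$-periodicity and coercivity. It also gives nonconvexity: the plateau following a slope-$2$ segment violates the midpoint inequality whenever $\sigma(y)>0$.

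\textbf{Step 1: the effective Hamiltonian.} I would compute $\Hbar$ on $[0,1]$ from the one-dimensional characterization: $\Hbar(p)=c$ when $p$ lies between the smallest and largest mean of admissible slope selections $q(y)\in\{H(y,\cdot)=c\}$. Here the viscosity condition allows only downward jumps of the slope. This is consistent with the monotone selection of Theorem \ref{thm:1D-Lip}.
\begin{itemize}
\item For a level $c\in(0,1)$, the branch $q=(1+c)/2$ is available everywhere.
\item The branch $q=c/2$ is available only where $\sigma(y)\ge c$, that is, off an interval of length $c/2$ around $y=0$. Elsewhere the solver must jump to $(1+c)/2$.
\end{itemize}
This should give an explicit piecewise formula for $\Hbar$ on $[0,1]$. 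I expect the relevant part to be $\Hbar(p)=c$ with $p=c/2+\tfrac{c}{2}\cdot\tfrac12$, plus a flat piece.

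Since $g'\in[0,1]$, I would then identify $u(1,1)$ through the Lax--Oleinik or Hopf formula for the resulting (convex or concave) piece of $\Hbar$. Alternatively, I would write the rarefaction or shock emanating from the corners of $g$ at $x=0$ and $x=4$. Only the corner at $0$ matters at time $1$, by Lemma \ref{finite-prop}.

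\textbf{Step 2: the microscopic solution.} I would reduce to the cusp-type datum $(x)_+$, using the finite propagation argument of the proof of Theorem \ref{main-thm}. I would then rescale, $u^\eps(x,t)=\eps U(x/\eps,t/\eps)$, exactly as in \eqref{scaling}. The task becomes to show that
\[
U(1/\eps,1/\eps)-\eps^{-1}u(1,1)\ \ge\ \Big(\tfrac1{4\sqrt{2\pi}}-o(1)\Big)\eps^{-1/2},
\]
or the analogous lower bound for the other sign.

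Heuristically, the fan from the corner at $0$ contains all slopes in $[0,1]$. In the microscopic problem, slopes in the plateau region only see a thin set of cells, namely where $\sigma(y)$ is below the level. The discrepancy accumulates over the roughly $1/\eps$ cells crossed by characteristics, like the deviation in a sum of independent-looking cell contributions. The constant $1/\sqrt{2\pi}$ strongly suggests a central binomial coefficient $\binom{2m}{m}2^{-2m}\sim(\pi m)^{-1/2}$ arising from counting optimal paths in an explicit discrete structure, with $m\sim\eps^{-1}$.

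So I would look for an explicit formula for $U$. One route is to build explicit piecewise-linear sub- and supersolutions cell by cell, in the spirit of the barriers $w_{i,j}$ of Lemma \ref{barriers}. Another is to exploit that on each cell the equation reduces to a min/max of linear transports, $u_t\mp u_x=0$ and $u_t+2u_x=0$ or $u_t+2u_x-1=0$, with the switching governed by $\sigma$. I would then compute $U$ at the target point by a discrete recursion over cells.

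\textbf{Main obstacle.} The hard step is Step 2: obtaining the exact asymptotics of the microscopic value at $(1,1)$ with the sharp constant, rather than merely an $\eps^{1/2}$ lower bound. I would first aim for a subsolution (or supersolution) giving $c\,\eps^{1/2}$ with some constant, by optimizing the width of the transition region around each zero of $\sigma$. Only afterwards would I refine the recursion to extract the constant $\tfrac1{4\sqrt{2\pi}}$ via Stirling's formula.
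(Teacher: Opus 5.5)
\textbf{Verdict: genuine gap.} Step 2 contains no argument, and Step 1 computes $\Hbar$ incorrectly.

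\textbf{Step 1.} Your reductions in Step 2 match the paper: finite propagation to the ramp $(x)_+$, and the scaling $u^\eps(x,t)=\eps U(x/\eps,t/\eps)$. Step 1, however, is incorrect and contradicts your own Step 0.
\begin{itemize}
\item For $c<\sigma(y)$ the point $q=(1+c)/2$ lies on the plateau, so $H(y,(1+c)/2)=\sigma(y)\neq c$. The branch $(1+c)/2$ is therefore available only where $\sigma(y)\le c$.
\item The set $\{\sigma<c\}=\{\dist(y,\Z)<c/2\}$ has length $c$, not $c/2$.
\item Your rule that only downward slope jumps are admissible is a convex-case heuristic. Here the corrector jumps up at the points where $\sigma=c$. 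This is legal because $H(z,\cdot)\equiv c$ on the whole interval $[c/2,(1+c)/2]$ there.
\end{itemize}
The corrector slope is $q(y)=\frac c2+\frac12\mathbf 1_{\{\sigma(y)<c\}}$, whose mean is exactly $c$. Hence $\Hbar(p)=p$ on $[0,1]$, with no flat piece (not $p=3c/4$). The effective solution for the ramp is $(x-t)_+$, and $u(1,1)=0$. This also settles the sign you left open: you must show $U(T,T)\ge\bigl(\frac1{4\sqrt{2\pi}}-o(1)\bigr)\sqrt T$ as $T\to\infty$.

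\textbf{Step 2.} This is the decisive gap. Phrases such as \emph{look for an explicit formula}, \emph{a discrete recursion over cells}, and the central-binomial guess describe a search, while this lower bound is the entire content of the theorem. The missing idea is a one-period transfer map for subsolutions in the moving frame $z=y-s$.

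For convex $a$ with $0\le a'\le1$, the paper builds a subsolution $V_a(y,s)=A_a(y,y-s)$ on one cell with $A_a(0,\cdot)=a$ and $A_a(1,\cdot)=\mathcal Ta$. Here $(\mathcal Ta)(z)=\int_0^\infty e^{-\rho}a(z+\gamma(\rho-1))\,d\rho$ and $\gamma=\frac14$.
\begin{itemize}
\item On the half-cell where $\sigma$ increases it uses $C_+(\tau,x)=\min_{r\in[0,\gamma]}\bigl(a(x+r)+\tau(\gamma-r)\bigr)$.
\item On the other half it uses $C_-(\tau,x)=a(x)+\gamma\bigl(G(x)-(1-\tau)\bigr)_+$, with $G-\gamma G'=a'$.
\end{itemize}
One has $(V_a)_s=-P$. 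The $y$-derivative $(V_a)_y$ equals $\frac P2$ where $P<\sigma(y)$, equals $\frac{P+1}2$ where $P>\sigma(y)$, and lies in $[\frac P2,\frac{P+1}2]$ where $P=\sigma(y)$. So the subsolution inequality reduces to the identity $H\bigl(y,\frac P2+\frac12\mathbf 1_{\{\sigma(y)<P\}}\bigr)=P$. The remaining nonsmooth points are either convex corners or interfaces where $H(y,\cdot)$ is constant on all candidate slopes.

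Gluing cells and iterating from $a_0=z_+$ gives
\[
a_n(z)=\frac1{(n-1)!}\int_0^\infty e^{-\rho}\rho^{n-1}\Bigl(z+\frac{\rho-n}4\Bigr)_+\,d\rho .
\]
Comparison then yields $U(n,n)\ge a_n(0)=\frac{n^ne^{-n}}{4(n-1)!}\sim\frac{\sqrt n}{4\sqrt{2\pi}}$ by Stirling.

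The constant is $\gamma\,\mathbb E[Z_+]$, coming from the central limit theorem for a sum of $n$ exponentials, not from a binomial coefficient. Only this one-sided subsolution bound is needed. Contrary to what you identify as the main obstacle, no supersolution and no exact asymptotics are required.
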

For the proof of Theorem \ref{thm:1D-optimal}, the properties of $H$ follow immediately from \eqref{eq:1D-opt-H}.
In particular, $H(y,q)=|q|$ for $q\notin[0,1]$, and the change of
slope from $2$ to $0$ shows nonconvexity whenever
$0<\sigma(y)<1$.
It is worth noting that $H$ is level-set quasiconvex in $q$.
The key geometric fact here is that, for $p\in (0,1)$,
\begin{equation}\label{eq:key geometric fact}
H\left(y,\frac p2+\frac12\mathbf 1_{\{\sigma(y)<p\}} \right) = p.
\end{equation}
\begin{lemma}\label{lem:1D-Hbar}
For the Hamiltonian defined in \eqref{eq:1D-opt-H},
\[
    \Hbar(p)=p \qquad \text{for all }p\in[0,1],
\]
and the effective solution with initial datum $(x)_+$ is
\begin{equation}\label{eq:1D-effective-ramp}
 u_{\rm ramp}(x,t)=(x-t)_+.
\end{equation}
\end{lemma}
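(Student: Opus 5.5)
To identify $\Hbar(p)$ for $p\in[0,1]$, I will exhibit an explicit Lipschitz periodic viscosity solution of the cell problem. Then I will check directly that the ramp solves the effective equation.

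\textbf{The corrector.} Fix $p\in(0,1)$ and take the slope profile suggested by \eqref{eq:key geometric fact}:
\[
W_p(y)=\frac p2+\frac12\mathbf 1_{\{\sigma(y)<p\}}.
\]
Since $\sigma(y)=2\dist(y,\Z)$, the set $\{\sigma<p\}\cap[0,1)$ is the arc $\{\dist(y,\Z)<p/2\}$, which has length $p$. Hence $\int_\T W_p=\frac p2+\frac p2=p$. Therefore $v_p(y):=\int_0^y (W_p-p)\,ds$ is $1$-periodic and Lipschitz, and $p+v_p'=W_p$ a.e.

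By \eqref{eq:key geometric fact} we have $H(y,p+v_p')=p$ wherever $v_p$ is differentiable. It remains to check the viscosity conditions at the two kinks $y=\pm p/2$ (mod $1$), where $\sigma=p$ and the slope jumps between $p/2$ and $(p+1)/2$.
\begin{itemize}
\item At $y=p/2$ the slope drops from $(p+1)/2$ to $p/2$, so $v_p$ has a concave kink. Then $D^+v_p+p=[p/2,(p+1)/2]$ and $D^-=\emptyset$. We need the subsolution inequality $H(p/2,q)\le p$ for $q\in[p/2,(p+1)/2]$. With $\sigma=p$ we get $H(y,q)=\min\{2q,\max\{2q-1,p\}\}$, and for $q$ in that range $2q\ge p$ and $2q-1\le p$, so $H=p$. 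The inequality holds with equality.
\item At $y=-p/2$ the slope rises, so the kink is convex. Then $D^-=[p/2,(p+1)/2]$, the same computation gives $H=p$, and the supersolution inequality holds.
\end{itemize}
So $v_p$ is a viscosity solution, and uniqueness of the ergodic constant gives $\Hbar(p)=p$ on $(0,1)$.

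\textbf{Endpoints.} $\Hbar$ is continuous, so the identity extends to $p\in\{0,1\}$. Alternatively, one can check directly that $H(y,0)=0$ and $H(y,1)=1$, using the constant corrector.

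\textbf{The ramp solution.} Since $H(y,q)=|q|$ for $q\notin[0,1]$, the same holds for $\Hbar$ there: a constant corrector works, because $H(\cdot,q)$ is independent of $y$ for such $q$. So $\Hbar(p)=p$ for $p\ge0$ in particular. For $u_{\rm ramp}=(x-t)_+$:
\begin{itemize}
\item Where $x\ne t$, it is classical: $u_t+\Hbar(u_x)$ equals $-1+1=0$ or $0+0=0$.
\item At the kink $x=t$ the kink is convex. The subdifferential is $\{(-\theta,\theta):\theta\in[0,1]\}$, and $-\theta+\Hbar(\theta)=0$, so the supersolution condition holds. There are no test functions from above.
\end{itemize}
Hence $u_{\rm ramp}$ is a viscosity solution with the right initial data. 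Uniqueness in the class of functions with linear growth then identifies it.

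The only delicate point is the viscosity check at the kinks. It works precisely because the kink slopes are ordered correctly: the concave kink sits where $\sigma$ crosses $p$ from below.
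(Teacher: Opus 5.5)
Your proof is correct and follows the paper's argument: you use the same corrector $v(y,p)=\int_0^y(q(r,p)-p)\,dr$ built from \eqref{eq:key geometric fact}, and the same key observation that $H(y,\cdot)\equiv p$ on $[p/2,(p+1)/2]$ when $\sigma(y)=p$. The only differences are that you check both kinks by hand where the paper invokes level-set quasiconvexity to reduce to the convex corner, and that you also verify the ramp formula, which the paper states without checking (your closing remark about kink orientation overstates things slightly, since $H(y,\cdot)\equiv p$ on that whole interval makes both viscosity inequalities hold at either kink).
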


\begin{proof}
The endpoint identities $\Hbar(0)=0$ and $\Hbar(1)=1$ follow by taking $v\equiv0$, since $H(y,0)=0$ and $H(y,1)=1$ for every $y$.

For $p\in(0,1)$, set
\begin{equation}\label{eq:1D-physical-slope}
 q(y,p)=\frac p2+\frac12\mathbf 1_{\{\sigma(y)<p\}},
 \qquad
 v(y,p)=\int_0^y(q(r,p)-p)\,d r.
\end{equation}
Since $|\{y\in\T:\sigma(y)<p\}|=p$, the function $v(\cdot,p)$ is periodic.
Away from the two crossing points,
\[
 H\bigl(y,p+v_y(y,p)\bigr)=H(y,q(y,p))=p.
\]

As $H$ is level-set quasiconvex in $q$, thanks to \cite[Chapter 2]{Tran}, we only need to check the supersolution property at $y=z$, the corner from below (convex corner) of the graph of $v(\cdot,p)$.
It is clear that $p+D^-v(z,p)=[p/2,(p+1)/2]$ and $\sigma(z)=p$.
Since $H(y,\cdot)=p$ on this interval, both viscosity inequalities hold.
Thus, for $p\in(0,1)$,
\begin{equation}\label{eq:1D-Hbar}
 \Hbar(p)=p.
\end{equation}
\end{proof}
\begin{proof}[Proof of Theorem \ref{thm:1D-optimal}]
Let $U$ solve the fast equation
\begin{equation}\label{eq:1D-fast-ramp}
 \begin{cases}
  U_s+H(y,U_y)=0 &\text{in }\R\times(0,\infty),\\
  U(y,0)=(y)_+ &\text{on }\R.
 \end{cases}
\end{equation}
The proof strategy for the lower bound is to construct a subsolution of \eqref{eq:1D-fast-ramp}.

Put $\gamma=1/4$.  
For any convex function $a:\R\to\R$ satisfying
\[
 0\leq a'\leq1,\qquad
 \lim_{z\to-\infty}a(z)=0,\qquad
 \lim_{z\to\infty}(a(z)-z)=0,
\]
define
\begin{equation}\label{eq:barrier-T}
 (\mathcal Ta)(z)=\int_0^\infty e^{-\rho}
 a\bigl(z+\gamma(\rho-1)\bigr)\,d\rho.
\end{equation}
\par\vspace{\baselineskip}
\noindent\emph{Sketch of the proof.}
For any such $a$, we will first construct $A_a:[0,1]\times\R\to\R$ with
$A_a(0,z)=a(z)$ and $A_a(1,z)=(\mathcal Ta)(z)$.  
We then show that
$V_a(y,s):=A_a(y,y-s)$ is a subsolution for $0<y<1$.  We start with $a_0(z)=z_+$, iterate according to $a_{n+1}=\mathcal Ta_n$ and let  $V(y,s)=A_{a_n}(y-n,y-s)$. After extending $V$ by zero for $y\leq0$, the resulting function $V$ is a global subsolution. Finally, we
estimate $V(n,n)=a_n(0)$ and rescale this estimate to obtain the
claimed lower bound.

\par\vspace{\baselineskip}
\noindent\emph{Construction of $A_a$ and $V_a$.}
By approximations and stability of viscosity solutions, we can assume that $F=a'$ is continuous and strictly increasing on $\{0<F<1\}$.  
For $0<\tau<1$, define $d(\tau)$ by
$F(d(\tau))=\tau$.  
Next, set
\[
 G(x)=\int_0^\infty e^{-\rho}F(x+\gamma\rho)\,d\rho=\frac{e^{x / \gamma}}{\gamma} \int_x^{\infty} e^{-r / \gamma} F(r) d r.
\]
Then, $F\leq G$, $G$ is strictly increasing on $\{0<G<1\}$, and one has
\begin{equation}\label{eq:G-ode}
 \begin{cases}
G-\gamma G'=F,\\
G(-\infty)=0,\ G(+\infty)=1.
 \end{cases}
\end{equation}
Thus there is a unique $b(\tau)$ such that
$G(b(\tau))=1-\tau$.

We now define two functions.  
For $0<\tau<1$, let
\begin{equation}\label{eq:barrier-Cplus}
 C_+(\tau,x)=\min_{r\in [0,\gamma]}(a(x+r)+\tau(\gamma-r))=
 \begin{cases}
  a(x+\gamma),\qquad&x\leq d(\tau)-\gamma,\\
  a(d(\tau))+\tau(x-d(\tau)+\gamma),
       \qquad&d(\tau)-\gamma\leq x\leq d(\tau),\\
  a(x)+\gamma\tau,\qquad&x\geq d(\tau).
 \end{cases}
\end{equation}
Also, let
\begin{equation}\label{eq:barrier-Cminus}
 C_-(\tau,x)=
 a(x)+\gamma(G(x)-(1-\tau))_+=
 \begin{cases}
  a(x),\qquad&x\leq b(\tau),\\
a(b(\tau))+\displaystyle\int_{b(\tau)}^xG(\xi)\,d\xi, \qquad&x\geq b(\tau).
 \end{cases}
\end{equation}
The endpoint values of $C_+$ and $C_-$ are defined by continuity.
Using these functions, define
\begin{equation}\label{eq:barrier-A}
 A_a(y,z)=
 \begin{cases}
  C_+(2y,z-2\gamma y),\qquad&0\leq y\leq1/2,\\
  C_-(2y-1,z-\gamma(2y-1)),\qquad&1/2\leq y\leq1,
 \end{cases}
\end{equation}
and set 
\[
V_a(y,s)=A_a(y,y-s)=
 \begin{cases}
  C_+(2y,\frac{y}{2}-s),\qquad&0\leq y\leq1/2,\\
  C_-(2y-1,\frac{y}{2}+\frac{1}{4}-s),\qquad&1/2\leq y\leq1,
 \end{cases}
 \]
 which utilizes the geometric fact \eqref{eq:key geometric fact}.

\par\vspace{\baselineskip}
\noindent\emph{Verification that $V_a$ is a subsolution.}
In either line of
\eqref{eq:barrier-A}, write $C=C_+$ or $C=C_-$, and set
$P=C_x$.  The chain rule gives
\begin{equation}\label{eq:barrier-gradients}
 (V_a)_s=-P,\qquad (V_a)_y=\frac P2+2C_\tau.
\end{equation}
If $0<y<1/2$, then $\tau=\sigma(y)=2y$.
The three regions in \eqref{eq:barrier-Cplus} give, respectively,
\[
 (C_\tau,P)=(0,P),\qquad
 (C_\tau,P)=(x-d(\tau)+\gamma,\tau),\qquad
 (C_\tau,P)=(\gamma,P),
\]
where $P\leq\tau$ in the first region, $0\leq x-d(\tau)+\gamma\leq\gamma$ in the middle, and $P\geq\tau$ in the third.
Thus, $(V_a)_y$ is $P/2$, belongs to
$[\tau/2,(\tau+1)/2]$, or is $(P+1)/2$, respectively.
In all three cases,
\[
 H(y,(V_a)_y)=P=-(V_a)_s.
\]

If $1/2<y<1$, then $\tau=2y-1$ and
$\sigma(y)=1-\tau$.  Differentiating
$G(b(\tau))=1-\tau$ and using \eqref{eq:G-ode}, we obtain
\[
 b'(\tau)=-\frac{\gamma}{1-\tau-F(b(\tau))}.
\]
Here, the denominator is $\gamma G'(b(\tau))>0$.
It follows from \eqref{eq:barrier-Cminus} that
\begin{align*}
 C_\tau=0,\quad P=F(x)\leq1-\tau
 \qquad&\hbox{if }x<b(\tau),\\
 C_\tau=\gamma,\quad P=G(x)\geq1-\tau
 \qquad&\hbox{if }x>b(\tau).
\end{align*}
Equation \eqref{eq:barrier-gradients} again gives
$H(y,(V_a)_y)=P=-(V_a)_s$.  
At $x=b(\tau)$, the left and right
slopes are $F(b(\tau))\leq1-\tau$ and
$G(b(\tau))=1-\tau$.  
Hence, $C_-$ has a corner from below (a convex corner).
If a $C^1$ function touched $V_a$ from above at this point, its restriction to fixed $y$ would touch this convex corner from above, which is impossible.  
Thus, the subsolution inequality holds there as well.

\par\vspace{\baselineskip}
\noindent\emph{The traces of $A_a$.}
Taking limits in
\eqref{eq:barrier-Cplus}--\eqref{eq:barrier-A} gives
\[
 A_a(0,z)=A_a(1/2,z)=a(z),\qquad
 A_a(1,z)=\int_{-\infty}^{z-\gamma}G(\xi)\,d\xi=(\mathcal Ta)(z).
\]
For the last equality, both functions vanish at $-\infty$, and their derivatives are equal to $G(z-\gamma)$.
At $y=1/2$, the one-sided spacetime gradients agree when $0<P<1$.  
If $P=1$, all possible spatial slopes belong to
$[1/2,1]$, where $H(1/2,\cdot)=1$.  
Thus, the subsolution inequality also holds at $y=1/2$.  
The construction gives $-1\leq (V_a)_s\leq0$ and $0\leq (V_a)_y\leq1$ a.e..


\par\vspace{\baselineskip}
\noindent\emph{Iteration of the construction.} Set
\[
 a_0(z)=z_+,\qquad a_{n+1}=\mathcal Ta_n,
\]
and let $A_n=A_{a_n}$.  
We add the subscript $n$ to $F,G,b,C_+$, and $C_-$ when they are constructed from $a_n$.
For $n\geq 0$ and $n\leq y\leq n+1$, define
\begin{equation}\label{eq:barrier-global-V}
 V(y,s)=A_n(y-n,y-s).
\end{equation}
At each interface $y=n$, the one-sided spacetime gradients agree for $0<P<1$; the remaining case has $P=0$ and spatial slopes in $[0,1/2]$, where $H(n,\cdot)=0$.  
The endpoint identities and periodicity of $H$ therefore show that $V$ is a subsolution on $y>0$.

The operator $\mathcal T$ preserves convexity, the slope bounds, and
the two limits imposed on $a$.  
It also follows by induction that \(a_n(z)=z\) for \(z\geq n/4\).  
Indeed, if $z\geq(n+1)/4$, every argument in \eqref{eq:barrier-T} is at least $n/4$, and integration gives $a_{n+1}(z)=z$.  

For $n\leq y\leq n+1/2$, put $\tau=2(y-n)$; for $n+1/2\leq y\leq n+1$, put
$\tau=2(y-n)-1$.  
At $s=0$, the corresponding $x$-coordinates are $n+\gamma\tau$ and $n+1/2+\gamma\tau$.  
Both lie in the linear region of $a_n$, so $F_n(x)=G_n(x)=1$.  
Hence, the last region of \eqref{eq:barrier-Cplus} gives $C_{n,+}(\tau,x)=x+\gamma\tau$.  
In the second line, integrating \eqref{eq:G-ode} from $b_n(\tau)$ to $x$ gives
$C_{n,-}(\tau,x)=a_n(x)+\gamma(G_n(x)-G_n(b_n(\tau)))=x+\gamma\tau$.
Since $x+\gamma\tau=y$, we have $V(y,0)=y$ for $y\geq0$.

Set $V(y,s)=0$ for $y\leq0$.  
For each $s>0$ and all sufficiently small $y>0$, the first formula gives
\[
 V(y,s)=y(y+1/2-2s)_+.
\]
If $0<s<1/4$, this function has a convex corner at $y=0$, and hence there is no test function touching it from above.  
If $s>1/4$, it is locally zero.  
At $s=1/4$, its right-hand profile is $y^2$, and every test function touching from above has spacetime gradient $(0,0)$. 
On $y<0$, the function is identically zero and $H(y,0)=0$.  
Thus, $V$ is a global viscosity subsolution of \eqref{eq:1D-fast-ramp} with initial datum $(y)_+$.  
Moreover, $-1\leq V_s\leq0$ and the initial condition imply $|V(y,s)-(y)_+|\leq s$.  
The comparison principle therefore gives $V\leq U$.

\par\vspace{\baselineskip}
\noindent\emph{Estimate of $U(T,T)$.}
Since the $n$-fold convolution of $e^{-\rho}\mathbf1_{\{\rho>0\}}$ is
$e^{-\rho}\rho^{n-1}/(n-1)!$, for $n\geq1$,
\begin{equation}\label{eq:barrier-gamma}
 a_n(z)=\frac1{(n-1)!}\int_0^\infty
 e^{-\rho}\rho^{n-1}\left(z+\frac{\rho-n}{4}\right)_+\,d\rho.
\end{equation}
Therefore
\begin{align}
 U(n,n)&\geq V(n,n)=a_n(0)\notag\\
 &=\frac1{4(n-1)!}\int_n^\infty
       (\rho-n)\rho^{n-1}e^{-\rho}\,d\rho\notag\\
 &=\frac{n^ne^{-n}}{4(n-1)!}
   \sim\frac{\sqrt n}{4\sqrt{2\pi}}.                 \label{eq:1D-contact-value}
\end{align}
Here, the last identity follows by differentiating $\rho^ne^{-\rho}$, and the asymptotic estimate follows from Stirling's formula.  
For $T>1$, let $N=\lfloor T\rfloor$.
By the Lipschitz bounds above,
\[
 U(T,T)\geq V(T,T)\geq V(N,N)-2=a_N(0)-2\sim \frac{\sqrt{N}}{4 \sqrt{2 \pi}}\ge \frac{\sqrt{T}}{4 \sqrt{2 \pi}}.
\]

\par\vspace{\baselineskip}
\noindent\emph{Conclusion.}
Let $u_{\rm ramp}^\eps$ denote the microscopic solution with initial
datum $(x)_+$.  
The exact scaling $u_{\rm ramp}^\eps(x,t)=\eps U(x/\eps,t/\eps)$, with
$T=\eps^{-1}$, now gives
\[
 \liminf_{\eps\to0}
 \frac{u_{\rm ramp}^\eps(1,1)-u_{\rm ramp}(1,1)}{\eps^{1/2}}
 \geq\liminf _{T \rightarrow \infty} \frac{U(T, T)}{\sqrt{T}} 
 \geq\frac1{4\sqrt{2\pi}}.
\]

Finally, both $H(y,\cdot)$ and $\Hbar$ are $2$-Lipschitz.  
Thus, the finite propagation at $(1,1)$ uses only $[-1,3]$, where $g=(x)_+$.
Hence, $u^\eps(1,1)=u_{\rm ramp}^\eps(1,1)$ and
$u(1,1)=u_{\rm ramp}(1,1)=0$.  
We therefore obtain \eqref{eq:1D-opt-limit}.
\end{proof}

\appendix
\section{No continuous selection of correctors in two dimensions} \label{appendix}
We give an example in two dimensions to show that there is no continuous selection of correctors in two dimensions.
Previously, \cite{MTY} gave one such example in three dimensions.
Because of this, it is not possible to use the approach of \cite{CDI} directly to obtain the convergence rate $o(\eps^{1/3})$ posed in Open Problem \ref{Op2}.

Let \(X=(x,y)\in\T^2\) and \(P=(p,q)\in\R^2\). 
Define
\[
H(X,P)
=
p^2+q^2
+
p\cos(2\pi y)
-
\sin^2(2\pi y).
\]
For \(a\in\R\), let $P_a=(a,0)$.
We will prove that the corrector associated with \(P_a\) is unique up to
an additive constant whenever \(a\neq0\).

\begin{lemma}
For fixed $a\in \R$,
\[
    \overline H(P_a)=a^2+|a|.
    \]
\end{lemma}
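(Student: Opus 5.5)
The plan is to prove the two inequalities $\Hbar(P_a)\le a^2+|a|$ and $\Hbar(P_a)\ge a^2+|a|$ separately. The upper bound comes from an explicit corrector depending only on $y$; the lower bound comes from averaging an arbitrary subsolution in $x$ and using convexity. Throughout, I use that $H(X,\cdot)$ is convex (indeed uniformly convex), so Lipschitz a.e.\ subsolutions and viscosity subsolutions coincide. I also use the standard characterization that $\Hbar(P)$ is the least $c$ for which $H(X,P+Dw)\le c$ admits a periodic Lipschitz subsolution $w$ (see \cite[Chapter 2]{Tran}).

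\emph{Upper bound via a one-dimensional corrector.} Look for $w(x,y)=v(y)$, so that $P_a+Dw=(a,v'(y))$. The cell problem then reduces to
\[
 v'(y)^2=c-R_a(y),\qquad R_a(y):=a^2+a\cos(2\pi y)-\sin^2(2\pi y).
\]
Writing $s=\cos(2\pi y)\in[-1,1]$, we get $R_a=a^2+as+s^2-1$, which is convex in $s$. Its maximum over $[-1,1]$ is therefore attained at an endpoint and equals $a^2+|a|$, at $s=\operatorname{sign}(a)$. Take $c=a^2+|a|$, so that
\[
 Q(y):=c-R_a(y)=|a|-a\cos(2\pi y)+\sin^2(2\pi y)\ge0.
\]
Suppose $a\ge0$ (the case $a<0$ is the same after the shift $y\mapsto y+\tfrac12$). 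Then $Q$ vanishes at $y=0$, and $Q$ is even in $y$. Define $v'=\sqrt{Q}$ on $(0,\tfrac12)$ and $v'=-\sqrt{Q}$ on $(\tfrac12,1)$. By this evenness, $\int_0^1 v'=0$, so $v$ is periodic and Lipschitz.

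At $y=0$ both one-sided slopes equal $\pm\sqrt{Q(0)}=0$, so $v$ is differentiable there and the equation holds. At $y=\tfrac12$, $v$ has a concave corner: no $C^1$ function touches from below, and any test slope from above lies between the one-sided slopes $\pm\sqrt{Q(1/2)}$, hence gives $H\le c$. Elsewhere $v$ is $C^1$ and solves the equation classically. Hence $v$ is a viscosity solution and $\Hbar(P_a)=a^2+|a|$ provided the lower bound holds. (In the case $a=0$ we simply get $v'=\pm|\sin(2\pi y)|$ and $c=0$.)

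\emph{Lower bound via averaging.} Let $w$ be any periodic Lipschitz subsolution with constant $c$, so that a.e.
\[
 (a+w_x)^2+w_y^2+(a+w_x)\cos(2\pi y)-\sin^2(2\pi y)\le c.
\]
Set $\bar w(y)=\int_0^1 w(x,y)\,dx$. Integrating the inequality in $x$ and applying Jensen's inequality to the convex map $P\mapsto H(X,P)$, whose $y$-dependence is untouched by the averaging, gives
\[
 H\bigl(y,(a+\textstyle\int w_x\,dx,\ \bar w'(y))\bigr)\le c\quad\text{a.e. } y.
\]
Periodicity of $w$ in $x$ gives $\int_0^1 w_x\,dx=0$, so this reads $a^2+\bar w'^2+R_a(y)-a^2\le c$, i.e.\ $R_a(y)\le c$ for a.e.\ $y$. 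Since $R_a$ is continuous, $c\ge\max R_a=a^2+|a|$.

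The main point needing care is this averaging step. One must justify that the a.e.\ inequality survives integration in $x$, which is fine by Fubini because $w$ is Lipschitz. The only other delicate point is the corner analysis in the construction, which has been checked above.
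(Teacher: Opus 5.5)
Your proof is correct. The corrector part matches the paper's. The paper takes $\phi_a(y)$ solving $|\phi_a'|^2=F_a(y)$ with $F_a=\sin^2(2\pi y)+|a|-a\cos(2\pi y)$, which is exactly your $Q$, and cites the standard existence theory for eikonal equations with $\min F_a=0$. You instead write the solution down explicitly and check the concave corner at $y=\tfrac12$ by hand.

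Where you diverge is in how the constant is identified. The paper stops once it has a periodic viscosity \emph{solution} at level $a^2+|a|$. The ergodic constant is unique: by comparison, only one $c$ admits a periodic viscosity solution. So $\Hbar(P_a)=a^2+|a|$ follows immediately, and your phrase ``provided the lower bound holds'' undersells what you have already proved.

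You instead use the inf-over-subsolutions characterization and supply the lower bound separately, by averaging in $x$ and applying Jensen. This step is valid: it uses convexity in $P$, the $x$-independence of $H$, $\int_0^1 w_x\,dx=0$, and differentiation of $\bar w$ under the integral for Lipschitz $w$. Within your scheme the supersolution check is then superfluous, since the upper bound only needs $v$ to be a subsolution.

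The two routes trade off as follows. The paper's is shorter but relies on uniqueness of the ergodic constant and the full solution property. Yours works entirely with subsolutions. It also gives a self-contained, elementary reason why no $x$-dependent test function can push the constant below $\max_y R_a(y)$.
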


\begin{proof}
Define 
\[
F_a(y)
=
\sin^2(2\pi y)
+
|a|-a\cos(2\pi y).
\]
Then it is easy to see that $\min_{y\in \T} F_a(y)=0$ and the minimum is attained only at $y=0$ if $a>0$ and at $y=1/2$ if $a<0$. 
Hence, there exists a unique Lipscthiz continuous viscosity solution $\phi_a=\phi_a(y)\in W^{1,\infty}(\T)$ such that
\[
|\phi_a'|^2-F_a(y)=0 \qquad \text{in $\T$}.
\]
See \cite[Chapter 4]{Tran} for example.
Accordingly, $\tilde \phi_a(x,y)=\phi_a(y)$ is also a viscosity solution of 
\[
H(x,y,P_a+D\tilde \phi_a)=a^2+|a| \qquad \text{in $\T^2$}.
\]    
\end{proof}

\begin{lemma}\label{lem:aubry}
Let $\mathcal{A}_{P_a}$ be the Aubry set at $P_a$.
Then,
\begin{align}
    \mathcal{A}_{P_a}=\T\times \{0\} \qquad &\text{if $a>0$},\\[3mm]
    \mathcal{A}_{P_a}=\T\times \left\{{\frac{1}{2}}\right\} \qquad &\text{if $a<0$}.
\end{align}   
Hence, the cell problem associated with $P_a$ has a unique solution up to a constant for $a\neq0$. 
\end{lemma}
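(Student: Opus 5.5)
The plan is to identify the Aubry set through the characterization \(\mathcal A_{P}=\bigcap\) of the sets where every (strict) subsolution of the cell problem at level \(\Hbar(P)\) fails to be strict, equivalently the projected Mather measures and their static curves. The Hamiltonian \(H(X,P)=p^2+q^2+p\cos(2\pi y)-\sin^2(2\pi y)\) is uniformly convex in \(P\), so weak KAM theory applies. Its Lagrangian is
\[
L(X,V)=\tfrac14\bigl|V-(\cos(2\pi y),0)\bigr|^2+\sin^2(2\pi y).
\]
For \(a>0\) I will treat the horizontal line \(y=0\); the case \(a<0\) is symmetric via \(y\mapsto y+\tfrac12\), which sends \(\cos\) to \(-\cos\) and \(a\) to \(-a\).

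\textbf{Step 1 (Mather measure on \(y=0\)).} Along the closed curve \(\gamma(t)=(x_0+vt,0)\) the action is
\[
L(\gamma,\dot\gamma)-P_a\cdot\dot\gamma=\tfrac14(v-1)^2-av.
\]
Minimizing over \(v\) gives \(v=1+2a\) and the value \(-a^2-a=-\Hbar(P_a)\), using the previous lemma. This curve is periodic in \(x\), so the uniform measure on it is closed and has average action \(-\Hbar(P_a)\). It is therefore a Mather measure, and its projection \(\T\times\{0\}\) lies in the Mather set, hence in \(\mathcal A_{P_a}\).

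\textbf{Step 2 (nothing else is in the Aubry set).} I will build a subsolution that is strict off \(y=0\). Take \(\tilde\phi_a(x,y)=\phi_a(y)\) from the previous lemma, where \(\phi_a\) is the corrector with \(|\phi_a'|^2=F_a\). Off \(y=0\) one can modify \(\phi_a\) to be \(C^1\) with \(|\psi'|^2<F_a\) wherever \(F_a>0\). Concretely, use a \(C^1\) periodic \(\psi\) with \(\psi'=\theta(y)\sqrt{F_a}\,\mathrm{sgn}\) arranged with zero mean, and \(|\theta|<1\) away from \(0\). Since \(\sqrt{F_a}>0\) on \(\T\setminus\{0\}\), there is room to get zero mean with \(|\theta|<1\) strictly, for instance \(\theta\) odd-type about a point. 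Then
\[
H(X,P_a+D\psi)-\Hbar(P_a)=\psi'^2-F_a<0
\]
for \(y\ne0\). This shows \(\mathcal A_{P_a}\subset\T\times\{0\}\).

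\textbf{Step 3 (uniqueness).} Uniqueness up to constants follows from the standard fact that the projected Aubry set is a uniqueness set: correctors are determined by their values on \(\mathcal A_{P_a}\). On \(\mathcal A_{P_a}=\T\times\{0\}\) any two correctors differ by a function that is constant along static curves. The static curve here is a single orbit wrapping the whole circle, so it is dense when the orbit is a closed loop covering \(\T\times\{0\}\), and the Peierls barrier vanishes identically on it. Hence the difference is one constant on all of \(\mathcal A_{P_a}\), and the corrector is unique up to that constant.

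\textbf{Main obstacle.} The hard step is Step 2, making the strict subsolution \(C^1\) and periodic with zero mean while \(\psi'^2<F_a\) strictly off \(\{0\}\). The issue is near \(y=0\), where \(F_a\) vanishes quadratically. I would handle this by taking \(\psi'=\theta\sqrt{F_a}\) with \(\sqrt{F_a}\) Lipschitz (\(F_a\) is smooth and nonnegative with a nondegenerate zero) and \(\theta\) smooth with \(|\theta|<1\) and \(\int\theta\sqrt{F_a}=0\). If needed, I would fall back on an averaged subsolution combining the two corrector branches.
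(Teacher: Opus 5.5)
Your proof is correct. The inclusion $\mathcal A_{P_a}\subset\T\times\{0\}$ uses the same device as the paper, a subsolution that is strict off $\{y=0\}$. The paper takes the Lipschitz function $w=\tilde\phi_a/2$, for which $H(X,P_a+Dw)-\Hbar(P_a)=-\tfrac34F_a(y)$ a.e.; you take a $C^1$ function of the same form.

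The difficulty you flag in Step 2 is not actually there. The choice $\theta\equiv0$ is allowed, i.e.\ the constant function already gives $H(X,P_a)-\Hbar(P_a)=-F_a(y)<0$ for $y\neq0$, so nothing needs to be built near $y=0$.

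The genuine difference is the reverse inclusion. The paper argues abstractly. The Aubry set is nonempty and flow-invariant, and $Dw=0$ on $\{y=0\}$. So the orbit through any Aubry point is the horizontal one with $\dot x=2a+1\neq0$, which sweeps out the whole circle. You instead exhibit the minimizing measure explicitly: the uniform measure on that orbit has action $-a^2-a=-\Hbar(P_a)$. This avoids nonemptiness and the graph/invariance structure of the lifted Aubry set. The price is computing $L$ and using the closed-measure characterization of $\Hbar$.

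Your Step 3 also spells out what the paper leaves in the word ``Hence''. Summing the subsolution inequality over one period, where the total critical action is zero, shows that every subsolution is calibrated along the orbit. So two correctors differ by a constant on $\mathcal A_{P_a}$, and the uniqueness-set property concludes. The fact actually used is that $\mathcal A_{P_a}$ is a single static class. The vanishing of the Peierls barrier on $\mathcal A_{P_a}\times\mathcal A_{P_a}$ is not what the argument uses, although it happens to hold here because $\tilde\phi_a$ is constant on $\{y=0\}$.

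Finally, the reduction of $a<0$ to $a>0$ should use $(x,y)\mapsto(-x,y+\tfrac12)$, since $H(x,y+\tfrac12,(p,q))=H(x,y,(-p,q))$. This is harmless because $H$ does not depend on $x$.
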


\begin{proof}
It suffices to consider the case $a>0$.
Let $w={\tilde \phi_a/2}$. 
Then $w$ is a viscosity subsolution of 
\[
H(x, y,P_a+D w)= a^2+|a| \qquad \text{in $\T^2$}.
\]
and the equality only holds when $(x,y)\in \T\times \{0\}$. 
Therefore,
\[
\mathcal{A}_{P_a}\subset \T\times \{0\}. 
\]
On the other hand, $D w=(0,0)$ on $\T\times \{0\}$, which together with the flow invariance property of the Aubry set, implies the conclusion.
See \cite{Fathi, Tran, TYWKAM} for more details on the Aubry set.
\end{proof}

\begin{proposition}
Let \(X=(x,y)\in\T^2\) and \(P=(p,q)\in\R^2\). 
Define
\[
H(X,P)
=
p^2+q^2
+
p\cos(2\pi y)
-
\sin^2(2\pi y).
\]
Then, there is no continuous selection of correctors to the cell problems.     
\end{proposition}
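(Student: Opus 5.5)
We argue by contradiction. Suppose $v(\cdot,P)$, $P\in\R^2$, is a selection of correctors, i.e.\ $v(\cdot,P)$ is a periodic viscosity solution of $H(X,P+Dv)=\Hbar(P)$ in $\T^2$, normalized by $v(0,P)=0$, such that $P\mapsto v(\cdot,P)$ is continuous into $C(\T^2)$. We restrict attention to the line $P_a=(a,0)$ and show that the two one-sided limits at $a=0$ are different functions.

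First, we identify the normalized corrector for $a\neq0$. By Lemma~\ref{lem:aubry}, the corrector at $P_a$ is unique up to constants, so $v(\cdot,P_a)=\tilde\phi_a-\tilde\phi_a(0)$, where $\tilde\phi_a(x,y)=\phi_a(y)$. Since uniqueness holds, $\phi_a$ is determined by the Aubry set. For $a>0$ it is the maximal subsolution vanishing on $\{y=0\}$, which gives the distance-type formula
\[
\phi_a(y)=\int_0^{d_\T(y,0)}\sqrt{F_a(s)}\,ds .
\]
Here we use the evenness of $F_a$ and take the symmetric viscosity solution, which has a concave corner at $y=1/2$. For $a<0$ we instead get
\[
\phi_a(y)=\int_{1/2}^{1/2\pm d_\T(y,1/2)}\sqrt{F_a}\,ds,
\]
the analogous function vanishing on $\{y=1/2\}$. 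Strictly, one should check that these formulas are viscosity solutions, including the supersolution property at the minimum point and the corner at the antipode. This is the standard one-dimensional eikonal computation, valid since $F_a$ vanishes only at the Aubry point.

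Next, we pass to the limit $a\to0^\pm$. We have $F_a\to F_0=\sin^2(2\pi y)$ uniformly, so
\[
\phi_a-\phi_a(0)\to \Phi_+(y):=\int_0^{d_\T(y,0)}|\sin 2\pi s|\,ds \qquad\text{as } a\to0^+ .
\]
As $a\to0^-$, the limit is instead $\Psi(y)-\Psi(0)$, where
\[
\Psi(y)=\int_{1/2}^{\,\cdot}|\sin 2\pi s|\,ds
\]
is the analogous function vanishing at $y=1/2$, i.e.\ $\Psi(y)=\int_0^{d_\T(y,1/2)}|\sin2\pi s|\,ds$. Now compare the two limits at $y=1/2$:
\begin{itemize}
\item $\Phi_+(1/2)=\int_0^{1/2}\sin(2\pi s)\,ds=1/\pi>0$;
\item $\Psi(1/2)-\Psi(0)=0-1/\pi=-1/\pi$.
\end{itemize}
So the two one-sided limits differ by $2/\pi$ in sup norm. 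This contradicts continuity of the selection at $P_0=(0,0)$, because both one-sided limits would have to equal $v(\cdot,P_0)$.

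The main obstacle is the second step, namely pinning down each corrector explicitly rather than merely knowing it is unique. The first thing to try is the representation $v=d_{P_a}(\cdot,z)$, the Ma\~n\'e critical distance from a point $z$ of the Aubry set. This representation holds by uniqueness together with Lemma~\ref{lem:aubry}. Because the problem reduces to one dimension, $d_{P_a}(y,z)=\int\sqrt{F_a}$ along the shorter arc, which gives the formulas above. Any route yielding $v(1/2,P_a)-v(0,P_a)\to\pm1/\pi$ as $a\to0^\pm$ suffices; in fact only the signs of these values are needed.
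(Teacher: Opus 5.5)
Your proposal is correct and follows the paper's own argument: Lemma~\ref{lem:aubry} forces the normalized corrector to be $v(\cdot,P_a)=\phi_a(y)-\phi_a(0)$, and your distance-type formulas yield exactly the paper's one-sided limits $\pm\frac{1-\cos(2\pi y)}{2\pi}$ as $a\to0^\pm$ (your values $\pm 1/\pi$ at $y=1/2$). The only difference is that you write out the explicit form of $\phi_a$ and its limits, which the paper states without derivation.
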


\begin{proof}
For $a\not=0$, in light of Lemma \ref{lem:aubry}, the solution to the cell problem associated with $P_a$ that is zero at the origin must be given by 
\[
v_a(x,y)=\phi_a(y)-\phi_a(0).
\]
As $a\to 0$, the  two limits are 
\[
\mathrm{sign}(a)\frac{1-\cos (2\pi y)}{2\pi}.
\]   
Thus, there is no continuous selection of the correctors associated with $P_a=(a,0)$ as $a\to 0$.
\end{proof}

\section*{Acknowledgment}
We would like to thank Professor Yeoneung Kim for providing various prompts and independent checks of some of our results and mechanisms via ChatGPT 5.6 Sol.

\end{document}